\documentclass[12pt]{amsart}
\usepackage[headings]{fullpage}

%%% Packages %%%

\usepackage{amsmath,amssymb,amsthm}
\usepackage{color}
\usepackage{graphicx}
\usepackage{enumerate}
\usepackage{eufrak}
\usepackage{url}
\usepackage{slashed}
\usepackage{bm}
\usepackage{tikz-cd}
\usepackage{pb-diagram}
\usepackage[english]{babel}
%\usepackage[german]{babel}
% I disabled the usepackage german because of %compilation difficulties

\usepackage{enumitem}
\usepackage[bookmarks=true,%
    colorlinks=true,%
    linkcolor=blue,%
    citecolor=blue,%
    filecolor=blue,%
    menucolor=blue,%
    urlcolor=blue,%
    breaklinks=true]{hyperref}

\usepackage[all]{xy}
\usepackage{verbatim}

\newcommand{\HP}{\mathbb{H}}

\newcommand{\Q}{\mathbb{Q}}
\newcommand{\R}{\mathbb{R}}
\newcommand{\Z}{\mathbb{Z}}

\newcommand{\Arc}{\mathrm{Arc}}

\newcommand{\Aut}{\mathrm{Aut}}

\newcommand{\Card}{\mathrm{Card\,}}
\newcommand{\Cay}{\mathrm{Cay}}

\newcommand{\End}{\mathrm{End}}

\newcommand{\Fill}{\mathrm{Fill}}

\newcommand{\GL}{{\mathrm {GL}}}

\newcommand{\Ker}{\mathrm{Ker}}

\newcommand{\Max}{\mathrm{Max}}
\newcommand{\Min}{\mathrm{Min}}

\newcommand{\PGL}{{\mathrm {PGL}}}

\newcommand{\Syst}{{\mathrm {Syst}}}

\newcommand{\Sys}{{\mathrm {Sys}}\,}

\newcommand{\arcosh}{\mathrm{arcosh}}

\renewcommand{\mod}{\,\mathrm{mod}\,}

\newtheorem{thm}{Theorem}
\newtheorem{cor}[thm]{Corollary}%[section]
\newtheorem{crit}[thm]{Criterion}
\newtheorem{lemma}[thm]{Lemma}%[section]
\newtheorem{prop}[thm]{Proposition}%[section]

\newtheorem*{thm*}{Theorem}
\newtheorem*{thm12}{Theorem12}

\newtheorem*{thm25}{Theorem 25}

\newtheorem*{crit*}{Criterion}
\newtheorem*{crit18}{Criterion 18}

\newtheorem*{prop22}{Proposition 22}

%[section]
%\newtheorem{rem}[thm]{Remark}%[section]

%[section]

%\newtheorem{fact}[fact]{Fact}

%%% Fonts %%%%%%%%%%%%%%%%%%%

%% Calligraphy 

%%% Standard Math %%%%%%%%%%%%%%%%%%%%

%%% Environments %%%

%------
% do not mess with this unless you know what you're doing.
% (It creates a squarish vertically centerd dot that can be used to
%  indicate a free parameter.)
\newcommand{\param}{{\mathchoice{\mkern1mu\mbox{\raise2.2pt\hbox{$
\centerdot$}}
\mkern1mu}{\mkern1mu\mbox{\raise2.2pt\hbox{$\centerdot$}}\mkern1mu}{
\mkern1.5mu\centerdot\mkern1.5mu}{\mkern1.5mu\centerdot\mkern1.5mu}}}
%-----

\renewcommand{\setminus}{{\smallsetminus}}

%

%\graphicspath{{figures/}}

%%%%%%%%%%%%%%%%%%%%%%%%%%%%%%%%%%%%%%%%%%%%%%%%%%%%%%%%%%%%%%%%%%%%%%%%%%%%
%%%%%%%%%%%%%%%%%%%%%%%%%%%%%%%%%%%%%%%%%%%%%%%%%%%%%%%%%%%%%%%%%%%%%%%%%%%%

\begin{document}
\title{Small  Systole Sets and Coxeter Groups}
\author{Ingrid Irmer}
\author{Olivier Mathieu}
\address{Department of Mathematics\\
Southern University of Science and Technology\\Shenzhen, China
}
\address{SUSTech International Center for Mathematics\\
Southern University of Science and Technology\\Shenzhen, China
}
\address{Institut  Camille Jordan du CNRS\\
UCBL, 69622 Villeurbanne Cedex, France
}

\email{ingridmary@sustech.edu.cn}
\email{mathieu@math.univ-lyon1.fr}

\today

\begin{abstract} The systoles of a  hyperbolic surface $\Sigma$ are the shortest closed geodesics. We say that the  systoles {\it fill} the surface if the set $\Syst(\Sigma)$ of all systoles cuts $\Sigma$ into polygons. 

We refine an idea of Schmutz \cite{SS} to construct 
closed hyperbolic surfaces $\Sigma$ of arbitrarily large genus with a  small set $\Syst(\Sigma)$ that fills. In fact, for the surfaces $\Sigma$ considered, the cardinality of $\Syst(\Sigma)$ is in $o(g/\sqrt{\ln\,g})$, where $g$ is the genus of $\Sigma$.
The proof is based on the theory Coxeter groups, combined with some elementary number theory. 
 
\end{abstract}

\maketitle

{\footnotesize
\tableofcontents
}

\noindent {\bf Keywords} Coxeter groups, Tits representation,
hyperbolic surfaces, systoles, Thurston spine.

\section{Introduction}
\label{sect:intro}

\noindent {\it 1.1 General introduction}

\noindent In this paper, surfaces are assumed to be oriented. A {\it systole} of a hyperbolic surface 
$\Sigma$ is an essential closed geodesic of minimal length. Let $\Syst(\Sigma)$ be the set of all systoles of $\Sigma$. We say that $\Syst(\Sigma)$ {\it fills} the surface if it cuts $\Sigma$ into polygons.

For $g\geq 2$,  the Teichm\"uller space  ${\mathcal T}_g$ is a manifold used for parameterising closed hyperbolic surfaces of genus $g$. In \cite{T85}, Thurston defined a remarkable subspace $\mathcal{P}_{g}\subset {\mathcal T}_g$, called the {\it Thurston spine}. It consists of all surfaces $\Sigma\in{\mathcal T}_g$ for which $\Syst(\Sigma)$ fills.

Since  $\mathcal{P}_{g}$  is  nonempty by \cite{T85}, one can meaningfully define the integer $\Fill(g)$ as the smallest cardinality of $\Syst(\Sigma)$ when $\Sigma$ varies over $\mathcal{P}_{g}$. Trying to understand the dimension of $\mathcal{P}_{g}$ leads to the question of finding an upper bound for $\Fill(g)$. In this direction, we prove

\begin{thm25} There exists an infinite set $A$ of
integers $g\geq 2$ such that

$$\Fill(g)\leq
{57\over \sqrt{\ln\ln\ln g}}\,\,
{g\over \sqrt{ \ln g}}$$

\noindent for any $g\in A$.
\end{thm25}

In fact, the theorem 25 proved in the main body is  slightly stronger than the previous statement. 
Since $57/\sqrt{\ln\ln\ln g}$ belongs to $o(1)$, it implies the result stated in the abstract. 
From our main result, we will deduce, in a forthcoming preprint \cite{IM}, a related bound for the codimension of the Thurston spine $\mathcal{P}_{g}$.

\bigskip\noindent
{\it 1.2 Previous works
for $\Fill(g)$}

\noindent 
The idea of studying examples for which the systoles cuts the surface into regular right-angled polygons goes back to \cite{SS}, and this idea has been used extensively in the subsequent works
\cite{APP}\cite{S}\cite{FB}.
Schmutz's paper \cite{SS} seems to have been motivated by the observation that many critical points of mapping class group-equivariant Morse functions are  of this type. Classical examples are the Bolza surface and the Klein quartic.
This investigation also led to the study of upper bounds of $\Fill(g)$. Hyperbolic surfaces with $2g$ systoles have been found in \cite{SS},\cite{APP} and \cite{S}. The recent  result of \cite{FB} can be reformulated as follows.

\begin{thm*}\cite{FB}
There is infinite set $B$ of integers $g\geq 2$ 
and an  increasing function $\psi$
(discussed below)  such that 

\centerline{$\Fill(g)\leq g/\psi(g)$ for any  $g\in B$.} 
\end{thm*}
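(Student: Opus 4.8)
The plan is to realize every surface as a quotient of a regular right-angled tessellation, so that the \emph{filling} property comes for free, and then to drive the number of systoles down by forcing each systole to wind through many edges.

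\textbf{Setup.} Fix an integer $p\geq 5$ and let $P_p\subset\HP^2$ be the regular right-angled hyperbolic $p$-gon, with common edge length $\ell_p$. Its reflections generate a Coxeter group $W_p$ tiling $\HP^2$ by copies of $P_p$; call this tessellation $\cT_p$. For a torsion-free finite-index subgroup $\Gamma$ of the orientation-preserving subgroup $W_p^+$, the quotient $\Sigma_\Gamma=\HP^2/\Gamma$ is a closed orientable hyperbolic surface tiled by copies of $P_p$. The edges of $\cT_p$ project to geodesic arcs on $\Sigma_\Gamma$; since exactly four tiles meet at right angles at every vertex, consecutive collinear edges concatenate into closed geodesics, the \emph{grid geodesics}, which partition the edge set. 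Their union is the $1$-skeleton and so cuts $\Sigma_\Gamma$ into $p$-gons; hence the grid geodesics automatically fill.

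\textbf{Counting.} With $F$ faces one has $E=pF/2$ edges and $V=pF/4$ vertices, so Euler's formula gives $g-1=F(p-4)/8$ and hence $E=\tfrac{4p}{p-4}(g-1)=O_p(g)$. Suppose — this is the crux below — that the systoles of $\Sigma_\Gamma$ are grid geodesics, and let $L_{\min}$ be the least combinatorial length of a grid geodesic, so that $\Sys(\Sigma_\Gamma)=L_{\min}\,\ell_p$. Because distinct grid geodesics use disjoint edge sets, the number $N$ of systoles satisfies $N\,L_{\min}\leq E$, whence
$$N\ \leq\ \frac{E}{L_{\min}}\ =\ \frac{4p}{p-4}\,\frac{g-1}{L_{\min}}.$$
Thus $N=o(g)$ as soon as $L_{\min}\to\infty$, and one reads off an increasing function $\psi$ with $\Fill(g)\leq N\leq g/\psi(g)$ along the resulting subsequence $B$. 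To make $L_{\min}\to\infty$ it is enough to make $\Sys(\Sigma_\Gamma)\to\infty$, since $L_{\min}=\Sys(\Sigma_\Gamma)/\ell_p$ once the systoles are grid geodesics. I would therefore let $\Gamma$ range over a tower of principal congruence subgroups of the arithmetic group $W_p^+$, so that the systole grows like a constant times $\log\Area(\Sigma_\Gamma)\asymp\log g$, giving $L_{\min}\asymp\log g$ and a $\psi$ of logarithmic type.

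\textbf{Main obstacle.} Everything hinges on the claim that the systole is realized by a grid geodesic, i.e.\ that no geodesic transverse to $\cT_p$ is shorter than the shortest grid geodesic. This is the genuinely delicate step, and a generic congruence cover need not satisfy it, so the family must likely be chosen to exploit the symmetry of $W_p$. The congruence structure supplies a \emph{uniform lower bound} on the length of \emph{every} closed geodesic, but one must still show that the global minimum is attained along the $1$-skeleton rather than by some oblique curve. I expect to control this by a transversality estimate — each transverse crossing of an edge of $\cT_p$ should force a definite length contribution, via a collar or angle argument around the $4$-valent vertices — combined with the arithmetic lower bound, so that an oblique geodesic crossing the tiling many times cannot undercut a grid geodesic. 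Matching this transverse lower bound against the explicit grid length $L_{\min}\ell_p$, \emph{uniformly} over the whole tower, is where the real work lies; once it is in place, the counting and the filling property are immediate.
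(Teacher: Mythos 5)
Your overall strategy --- tile by regular right-angled polygons, pass to finite-index subgroups of the associated reflection group, and bound the number of grid geodesics via the Euler characteristic --- is the same one the paper uses (following Schmutz), but two essential points are missing, and the second is where all of the paper's actual work lives. The first gap is the filling property itself: $\Fill(g)$ requires the set of \emph{systoles} to cut the surface into polygons. In your setup the union of \emph{all} grid geodesics is the $1$-skeleton and fills, but in a generic congruence cover of $W_p^+$ the grid geodesics have different lengths, so the systoles are only the shortest ones, and that proper subset of the $1$-skeleton need not fill. The paper avoids this by forcing \emph{every} curve of the tessellation to consist of exactly $2k$ edges (the $2k$-regularity axiom, encoded group-theoretically as condition (11.2), namely that $H$ meets no conjugate of $t_i^l$ for $1\leq l<k$); this is precisely why it works with the quotient Coxeter group $W(k)=W(\infty)/N(k)$ and its universal tessellated surface $\mathcal{H}(k)$ rather than with arbitrary subgroups of the full reflection group of $\HP$. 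Without some such uniformization of curve lengths your count bounds the number of systoles but leaves the filling hypothesis unverified.

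The second gap is the step you explicitly defer: showing that no oblique closed geodesic undercuts the grid geodesics. The tools you propose do not close it. A collar/transversality estimate gives length at least $c\,m$ for a geodesic crossing $m$ edges, so the dangerous case is an oblique closed geodesic crossing \emph{few} edges; a uniform arithmetic lower bound on all closed geodesics cannot exclude this, since it bounds the grid geodesics from below by the very same quantity and says nothing about which geodesic attains the minimum. What is needed is a statement of the form: a closed geodesic crossing fewer than $2k$ edges of each colour is null-homotopic, hence cannot exist. That is the content of the paper's Theorem 9 (a cyclic loop in the Cayley complex $\Cay^+\,W$ with $l_R<\gamma(W_R)$ and $l_B<\gamma(W_B)$ is null-homotopic, proved via Tits' word combinatorics and Gal's theorem on right-angled partitions), transported to the surface through the homeomorphism $\Cay^+\,W(k)\simeq\mathcal{H}(k)$ and combined with the hypothesis that every conjugate of $H$ meets the ball $B_{4k}$ trivially. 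Nothing in your sketch substitutes for this combinatorial input. It is also why your projected $\psi\asymp\log g$ is too optimistic for this method: forcing $H$ to avoid $B_{4k}$ by a congruence condition in the Tits representation over $\Q(\cos\pi/k)$ costs an index of size $3^{O(k\phi(2k))}$, i.e.\ $g\approx e^{ck^2/\ln\ln k}$, which yields only $\psi\asymp\sqrt{\ln g}$ (up to the $\ln\ln\ln$ factor).
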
 

The function $\psi$ is only implicitly defined in {\it loc. cit.}, but a rough estimate is  given in a footnote. It is clear that $\psi(g)$ is in $o(\lg^*g)$, where  $\lg^*$ is, essentially, the inverse of the Ackerman function $n\mapsto F(4,n)$ (see \cite{CLRS}, Section 3.2, pp. 58-59). In particular $\lg^*$ is smaller than the $m$th-iterate $\lg^{(m)}=\lg\circ\lg\circ \dots\lg$, of the base-$2$ logarithmic function $\lg$, for any $m\geq 1$. Therefore $\psi(g)$ is much smaller than the factor $\sqrt {\ln g}$ in our denominator.

Conversely, a rough lower bound $\sim\pi g/\ln g$ has been found in \cite{APP}.  According to {\it loc.cit.}, this lower bound seems difficult to obtain. Intuitively, the difficulty comes from the fact that a small number $N$ of filling systoles implies a relatively large systole length, at least $O(g/N)$. However, as a loose general rule, the number of systoles increases with the systole length.

\bigskip\noindent
\noindent\textit{ 1.3
Main ideas of the proof and organisation of the paper}

\noindent
The surfaces studied in this paper were motivated by the examples in Theorem 36 of \cite{SS}. In the examples of {\it loc. cit.} the systoles cut the surface into regular right-angled polygons.
However, here we will use the refined notion of {\it standard} tesselations.  In order to explain this, we first need to give some definitions.

For simplicity, {\it we will only consider hexagonal tesselations}. A {\it decoration} of the regular right-angled hexagon $P$ is a cyclic indexing of the edges by $\Z/6\Z$. Since a cyclic indexing of the edges
defines an orientation of the hexagon,
$P$ admits exactly two decorations, up to orientation preserving isometry.
By definition, a {\it standard} tesselation 
 of an oriented hyperbolic closed surface $\Sigma$ 
 is a tesselation by {\it decorated} regular right-angled hexagons.
 By definition, the {\it curves} of a standard tesselation $\tau$ are the maximal geodesic components of the 
 the $1$-skeleton $\tau_1$ of $\tau$. The standard tesselation $\tau$ is called {\it $2k$-regular} if all its curves consist of exactly $2k$ edges. Since each side of a regular right-angled hexagon has length 
 $\arcosh\, 2$, all curves of a $2k$-regular standard tesselation are closed geodesics of length
 $2k\,\arcosh 2$. We also define the Coxeter group
 $W(k)$ by the presentation
 
 $$\langle (s_i)_{i\in \Z/6\Z} \mid
s_i^2=1, \, (s_is_{i+1})^2=1,
\,\mathrm{and}\,(s_is_{i+2})^k=1,
\,\forall i\in\Z/6\Z 
\rangle$$

\noindent Let $\epsilon:W(k)\to\Z/2\Z$ be the sign homomorphism, defined by 
$\epsilon(s_i)=-1$, for any $i\in\Z/6\Z$, and set
$W(k)^+=\Ker\,\epsilon$.

Before going to the core of the proof, we will explain the connection between the Coxeter group $W(k)$ and the use of decorated tiles.
The tesselations considered here are, somehow, ``doubly 
regular": the tiles are regular hexagons, and all curves have the same length. These two properties
appear in \cite{SS} and the subsequent papers \cite{S}\cite{FB}. 
The advantage of adding a decoration to the tiles
is explained by the following
observation, proved in Section 3.

\begin{thm12} 
 There is a one-to-one correspondence between 
 \begin{enumerate}
 
 \item the $2k$-regular standard tesselations $\tau$ of closed oriented surfaces, and

\item the finite index subgroups $H\subset W(k)^+$ 
satisfying (\ref{condition}.1) and
(\ref{condition}.2).

\end{enumerate}
\end{thm12}

\noindent The assertions (\ref{condition}.1) and
(\ref{condition}.2) are described in Lemma 11 of
Section 3. These conditions, which are usually satisfied, are easy to check.  

We can now describe the main ideas of the proof. Using the previous statement, obtaining a bound on $\Fill(g)$ is  reduced to the theory of  Coxeter groups. 
The theory of right-angled Coxeter groups is a classical tool to investigate the tesselations of the Poincare half-plane $\HP$ \cite{Da}. Our construction is similar, but the Coxeter groups $W(k)$ are not right-angled.
The proof contains three parts.

In the first part, namely in Sections 2 and 3, we look at the delicate question - are the set of curves of a $2k$-regular standard tesselation $\tau$  and  the  set of systoles of the corresponding surface  identical? A partial answer  is provided at the end of Section 3.

\begin{crit18} Assume that $k\geq 4$ is even. Let $H$ be a subgroup of $W(k)^+$, any conjugate of which
 intersects $B_{4k}$ trivially, where $B_{4k}$ is the ball of radius $4k$ in $W(k)$. Then the set  of curves of the corresponding tesselation $\tau$ is the set of systoles.
\end{crit18}

The proof of the criterion is quite long,  and it
uses a new result on the  combinatorics of Coxeter groups $W$, namely the Theorem 9 proved in Section 2.  We define the {\it Cayley complex} 
$\Cay^+\,W$ of  $W$ by attaching some $2$-cells to its Cayley graph $\Cay\,W$. Theorem 9  involves Coxeter groups endowed with right-angled partition. It
shows that some``relatively short'' loops of  $\Cay\,W$ are null-homotopic in $\Cay^+\,W$. 

In the second part of the proof, i.e. Section 4, we find an  upper bound for the index of a subgroup $H$ satisfying Criterion 18:

\begin{prop22}  For any $k\geq 3$, there exists a normal subgroup
$H(k)$ of $W(k)$ satisfying the criterion 18 with

\centerline{
$[W(k):H(k)]\leq 3^{72k\phi(2k)}$,}

\noindent where $\phi$ is Euler totient function.
\end{prop22}

Its proof uses the Tits representation  $\rho:W(k)\to \GL_6(K)$ \cite{T61}, where $K$ is the number field 
$\Q(\cos\pi/k)$. We have $H(k)=\{w\in W(k)\mid \rho(w)\in \Gamma\}$, where $\Gamma$ is a suitable congruence subgroup of $\GL_6(K)$.

The last part of the proof, in Section 5, explains the factor ${57\over \sqrt{\ln\ln\ln g}}$. It is based on Landau's Theorem \cite{L}\cite{HW} about  the asymptotics of $\phi(k)/k$, which  is a corollary of the classical prime number theorem
\cite{VP}\cite{H}.

%%%%%%%%%%%%%%%%%%%%%%%%%%%%%%%%%%%%%%%%%%%%%%%%%
%%%%%%%%%%%%%%%%%%%%%%%%%%%%%%%%%%%%%%%%%%%%%%%%%%
\section{The 2-dimensional Cayley Complex $\Cay^+\,W$}
\label{sect1}
Given a Coxter system $(W,S)$, the Tits combinatorics \cite{T68} describes the loops in its Cayley graph $\Cay\, W$.

 In this section, we define a Cayley complex $\Cay^+\,W$ obtained by attaching a collection of 2-cells to $\Cay\, W$, for each commutative rank two parabolic subgroup of $W$. This square complex
$\Cay^+\,W$ is unrelated with the well-known simplicial complexes of \cite{Da}, like the Coxeter's complex and the Davis's complex.

We also define the notion of right-angled partition of a Coxeter group $W$. Theorem \ref{null}, proved in this section, states that for a Coxeter group $W$ endowed with a right-angled partition, the ``relatively short" loops of $\Cay\, W$ are null-homotopic in $\Cay^+\,W$. This result will be the main ingredient of the proof of Criterion \ref{systole} in Section 3.

\bigskip\noindent
{\it 2.1 Coxeter Groups}

\noindent Let $S$ be a set. A square matrix $M=(m_{s,t})_{s,t\in S}$ is a {\it Coxeter matrix} if it satisfies the following conditions:
\begin{enumerate}
\item $m_{s,s}=1$, 

\item  for $s\neq t$, $m_{s,t}$ belongs to $\Z_{\geq 2}\cup \{\infty\}$,

\item $m_{s,t}=m_{t,s}$.
\end{enumerate}
\noindent The group $W$ defined by the presentation

$$\langle s\in S\mid (st)^{m_{s,t}}=1, \forall s,t\in S \,{\mathrm {with}}\,m_{s,t}\neq\infty\rangle$$

\noindent is called the {\it Coxeter group} associated with the Coxeter matrix $M$. Unless stated otherwise, it will be assumed that the set $S$ is finite. Its cardinality is called the {\it rank} of $W$. The pair $(W,S)$ is called a {\it Coxeter system}. 

Let $\epsilon:W\to\{\pm 1\}$ be the group homomorphism uniquely defined by the property that $\epsilon(s)=-1$ for any $s\in S$. This is called the {\it signature homorphism}.

Denote by $\mathcal{W}_S$ the free monoid generated by $S$. An  element $w$ of $\mathcal{W}_S$ is a word
$w=s_1\dots s_n$ where $s_1,\dots,s_n$ belong to the alphabet $S$. By definition $l(w):=n$ is the length of $w$. There is a natural monoid homomorphism $\mathcal{W}_S\to W, w\mapsto \overline{w}$ whose restriction to $S$ is  the identity. The element $w$ is called a {\it word representative} of $\overline w$.
The {\it Bruhat length} of an element $u\in W$, 
denoted by $l(u)$,
is the minimal length of a word representative of 
$u$.  A word $w\in\mathcal{W}_S$ is called {\it reduced}
if $l(w)=l({\overline w})$.

\bigskip\noindent
{\it 2.2 The Tits word combinatorics}

\noindent
For any distinct $s,t\in S$ with
$m_{s,t}<\infty$, let $w(s,t)$ be the
word of length $m_{s,t}$ starting with $s$ and alternating the letters $s$ and $t$. The relation
$(st)^{m_{s,t}}=1$ is in fact equivalent to

\centerline{$\overline{w(s,t)}=\overline{w(t,s)}$.}

The {\it subwords} of a word $w=s_1 s_2\dots s_n\in \mathcal{W}_S$ are the words
$s_{i_1}s_{i_2}\dots s_{i_k}$, where
$1\leq i_1<i_2\dots<i_k\leq n$ and the 
{\it substrings} of $w$ are the subwords 
$s_ps_{p+1}\dots s_q$ where 
$1\leq p\leq q\leq n$. 

An {\it elementary reduction} \cite{Ca}
is a pair of words $(w,w')$ such that 
$w'$ can be obtained from $w$ by one of the following
reductions:
\begin{itemize}
\item{\textit{Reduction of first type}: deleting two consecutive identical letters
in $w$, or}
\item{\textit{Reduction of second type}: replacing in $w$ a substring $w(s,t)$  by $w(t,s)$.}
\end{itemize}
\noindent 
Consequently,  we have   $l(w')= l(w)-2$ for a reduction of the fist type, and $l(w')= l(w)$ otherwise. 

\begin{thm} (Tits)\label{T68} Let $w,w'$ be two words with $\overline{w}=\overline{w}'$. If $w'$ is reduced,
one can transform $w$ into $w'$ by a sequence of elementary reductions.
\end{thm}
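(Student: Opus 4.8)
The plan is to factor the statement into the two classical halves of Tits' solution to the word problem and then glue them. First, I would show that every word reduces to a reduced word by elementary reductions; second, that any two reduced words for the same element are related by type-2 reductions (braid moves) alone. Granting both, the theorem follows: given $w$ and a reduced $w'$ with $\overline{w}=\overline{w}'$, reduce $w$ by elementary reductions to some reduced word $w''$; since $w''$ and $w'$ are both reduced and $\overline{w''}=\overline{w'}$, connect them by braid moves, and concatenate the two sequences.

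The engine for both halves is the Exchange Condition, which I would establish via the geometric (Tits) representation $\sigma\colon W\to \GL(V)$, $V=\bigoplus_{s\in S}\R\,\alpha_s$, where each $s$ acts as the reflection fixing the hyperplane $\{B(\alpha_s,-)=0\}$ for the bilinear form $B(\alpha_s,\alpha_t)=-\cos(\pi/m_{s,t})$. The key computation is that for a reduced word $s_1\cdots s_n$ the roots $\beta_i=\overline{s_1\cdots s_{i-1}}(\alpha_{s_i})$ are distinct and positive, and are precisely the positive roots made negative by $(\overline{s_1\cdots s_n})^{-1}$; tracking the single sign flip that occurs when one multiplies on the left (or right) by a generator then yields the Exchange Condition: if $l(su)<l(u)$ for reduced $u=s_1\cdots s_n$, there is an index $i$ with $su=\overline{s_1\cdots\widehat{s_i}\cdots s_n}$, together with its right-handed mirror. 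From this I would derive the dihedral lemma: if $s\neq t$ are both right descents of $u$ (that is, $l(us)<l(u)$ and $l(ut)<l(u)$), then $m_{s,t}<\infty$ and $u$ admits a reduced word whose last $m_{s,t}$ letters are the alternating word $w(s,t)$ (equivalently $w(t,s)$).

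Matsumoto's half—that reduced words are braid-equivalent—then proceeds by induction on the common length $n$. If the first letters agree, strip them and induct. If they differ, say $w$ begins with $s$ and $w'$ with $t$, then $s$ and $t$ are both left descents of $\overline{w}$, so by the dihedral lemma $\overline{w}$ has reduced words beginning with $w(s,t)$ and with $w(t,s)$; applying induction after the first letter connects $w$ to the former and $w'$ to the latter, while the two alternating prefixes differ by a single type-2 reduction. For the reduction half, given a non-reduced word I would locate its shortest non-reduced prefix $s_1\cdots s_j$, so that $u:=\overline{s_1\cdots s_{j-1}}$ is reduced and $s_j$ is a right descent of $u$. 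If $s_{j-1}=s_j$, I delete the pair (type 1). Otherwise $s_{j-1},s_j$ are distinct right descents, and the dihedral lemma—together with Matsumoto, used to rewrite inside the prefix by type-2 reductions—lets me replace the prefix by a reduced word ending in $s_j$; the already-present trailing $s_j$ then creates two consecutive identical letters, which I delete. In either case the length drops by two, and I induct.

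I expect the main obstacle to be twofold. The genuinely technical heart is the Exchange Condition, whose only honest proof I know runs through the sign-of-roots bookkeeping in the geometric representation—this is where essentially all the work lives. The second, more easily overlooked subtlety is exactly the mismatch between the abstract Deletion Condition, which removes two letters that need not be adjacent, and an \emph{elementary} reduction of the first type, which deletes only \emph{consecutive identical} letters: closing this gap is precisely what forces the dihedral lemma and a use of braid moves to transport a cancellation into an adjacent position, and it is the step one must take care not to skip.
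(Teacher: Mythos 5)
Your outline is correct and is the standard proof of Tits' theorem (Exchange Condition via the sign-of-roots argument in the geometric representation, the two-descents/dihedral lemma, Matsumoto's braid-equivalence of reduced words, and the careful conversion of the Deletion Condition into \emph{adjacent} cancellations); you rightly flag the last point as the step that is easiest to fumble. The paper itself offers no proof of this statement --- it cites Tits \cite{T68}, Davis \cite{Da} \S 3.4 and Casselman \cite{Ca} --- and your argument is essentially the one carried out in those references, so there is nothing to reconcile.
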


\noindent Besides the original reference in French \cite{T68}, the reader can consult Davis's book \cite{Da}, section 3.4. (The elementary reductions are called elementary $M$-operations in {\it loc.cit.}.)  Our presentation of the Tits Theorem is  close to Casselman's webpage \cite{Ca}.  

\bigskip\noindent
{\it 2.3 Girth of $W$}

\noindent Set $\gamma(W)=2\,\Min_{s\neq t} \, m_{s,t}$.
The integer $\gamma(W)$, possibly infinite, is the girth of the Cayley graph of $W$, see  \cite{Lu}
Lemma 2.1. It will also be  called the {\it girth} of $W$. 
For any distinct $s,t\in S$, we have 
$l\bigl(w(s,t)\bigr)\geq \gamma(W)/2$. Therefore any
word $w$ with $l(w)<\gamma(W)/2$ can be reduced only by
reductions of the first type. A consequence of 
Theorem \ref{T68} is the following

\begin{cor}\label {girth}
Let $w,w'$ be two words with $\overline{w}=\overline{w}'$. Assume that  $l(w)<\gamma(W)/2$. 

\begin{enumerate}
\item if $w$ and $w'$ are reduced,  we have $w=w'$,

\item if $w$ is not reduced, it contains a substring $ss$ for some $s\in S$.
\end{enumerate}
\end{cor}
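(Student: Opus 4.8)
The plan is to derive both assertions directly from the Tits Theorem (Theorem~\ref{T68}), exploiting the fact --- already noted just above the statement --- that a word of length $<\gamma(W)/2$ admits no reduction of the second type. Indeed, a second-type reduction rewrites a substring $w(s,t)$, and any such substring has length $m_{s,t}\geq \gamma(W)/2$; a word of length $<\gamma(W)/2$ is simply too short to contain one. So the only reductions ever available to such a short word --- and, since reductions never increase length, to any word obtained from it --- are reductions of the first type. This single observation will do all the work.

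For part~(1), I would first note that since $w$ and $w'$ are both reduced and represent the same element, $l(w)=l(\overline{w})=l(w')$, so $w'$ is short as well. Applying Theorem~\ref{T68}, transform $w$ into $w'$ by a sequence of elementary reductions. Every word in this sequence represents $\overline{w}$, hence has length $\geq l(\overline{w})=l(w)$; a first-type reduction would produce a word of length $l(w)-2<l(\overline{w})$, which is impossible. Thus no first-type reduction can occur, so all reductions in the sequence would have to be of the second type. But by the opening observation no second-type reduction can be applied to a word of length $<\gamma(W)/2$. Hence the sequence is empty and $w=w'$.

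For part~(2), I would pick a reduced word representative $w'$ of $\overline{w}$; since $w$ is not reduced, $l(w')=l(\overline{w})<l(w)<\gamma(W)/2$. Applying Theorem~\ref{T68} again, transform $w$ into $w'$ by elementary reductions. Every intermediate word has length $\leq l(w)<\gamma(W)/2$, so no second-type reduction ever occurs and the whole sequence consists of first-type reductions. In particular the very first reduction is of the first type, and such a reduction deletes a substring $ss$ for some $s\in S$; therefore $w$ itself contains a substring $ss$, as claimed.

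There is essentially no hard part here: the corollary is a clean consequence of Tits' Theorem once the length bound rules out second-type reductions. The only point deserving a little care is checking that the bound $<\gamma(W)/2$ persists along the entire reduction sequence, and this is immediate --- first-type reductions strictly decrease length and second-type reductions are already excluded, so no word in the sequence can grow past the initial length.
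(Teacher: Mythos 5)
Your proof is correct and is exactly the argument the paper intends: it leaves the corollary without a written proof, relying on the observation stated just before it that a word of length $<\gamma(W)/2$ is too short to contain any substring $w(s,t)$ and hence admits only first-type reductions, which is precisely the engine of your argument. Your write-up simply makes explicit the length bookkeeping (first-type reductions cannot occur in part (1) because they would drop below the Bruhat length, and must occur first in part (2)), so there is nothing to add.
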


\bigskip\noindent
{\it 2.4 Loops} 

\noindent By definition, a {\it cyclic word} on the alphabet $S$ of length $n$ is a word $w=s_1\dots s_n$ in $\mathcal{W}_S$ modulo a cyclic permutation. For example the cyclic words $s_1s_2s_3$ and $s_3s_1s_2$ are  equal.

For a cyclic word $w=s_1\dots s_n$, it will be convenient to assume that the indices belong to $\Z/n\Z$. The {\it substrings} of length $l\leq n$ of the cyclic word $w=s_1s_2\dots s_n$ are the words $u=s_is_{i+1}\dots s_{i+l-1}$, for some 
$i\in \Z/n\Z$. For example $s_4s_1$ is a substring of the cyclic word $s_1s_2s_3s_4$.

A word $w=s_1\dots s_n$ in $\mathcal{W}_S$ of length $n>0$ is called a {\it loop} if $\overline{w}=1$. For a cyclic word $w=s_1\dots s_n$, the condition $\overline{w}=1$ is independent of its representatives in $\mathcal{W}_S$. It follows that 
$w$ is called a {\it cyclic loop} if any of its representatives in $\mathcal{W}_S$ is a loop. Since $\epsilon(s)=-1$ for any $s\in S$, the length of any loop or cyclic loop is even. 

\begin{cor}\label{cyclicgirth} Let $w=s_1\dots s_{2n}$ be a cyclic loop, with $2n<\gamma(W)$. Then
there are two distinct indices $i,j\in\Z/2n\Z$ such that $s_i=s_{i+1}$ and $s_j=s_{j+1}$. 
\end{cor}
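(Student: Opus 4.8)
The plan is to deduce Corollary~\ref{cyclicgirth} from Corollary~\ref{girth}, which already handles the linear (non-cyclic) case. First I would cut the cyclic loop $w=s_1\dots s_{2n}$ at two far-apart points to express it as a product of two linear words, each short enough to invoke the hypothesis $2n<\gamma(W)$. Concretely, I would split $w$ into $u=s_1\dots s_n$ and $v=s_{n+1}\dots s_{2n}$, so that $\overline{w}=1$ gives $\overline{u}=\overline{v}^{-1}=\overline{s_{2n}\dots s_{n+1}}$. Each of $u$ and the reversed word $\tilde v=s_{2n}\dots s_{n+1}$ has length $n<\gamma(W)/2$, so Corollary~\ref{girth} applies to both.

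The key dichotomy is whether $u$ (equivalently $v$) is reduced. If at least one of $u$, $v$ is \emph{not} reduced, then by part~(2) of Corollary~\ref{girth} it contains a substring $ss$, which is already one of the two desired coincidences $s_i=s_{i+1}$; I would then have to produce a \emph{second}, distinct such coincidence. The more delicate case is when both $u$ and $\tilde v$ are reduced: then $\overline{u}=\overline{\tilde v}$ with both reduced and of length $<\gamma(W)/2$, so part~(1) of Corollary~\ref{girth} forces $u=\tilde v$ letter by letter, i.e. $s_i=s_{2n+1-i}$ for all $i$. But this cannot happen for a genuine loop: if $u$ is reduced then $\overline u\neq 1$ (its length is $n\geq 1$), yet we also need $\overline w=1$, and I would extract a contradiction from the equality $u=\tilde v$ by re-reading $w$ as $u\tilde v^{-1}$-type data—so in fact both words must be non-reduced, and each contributes a repeated-letter substring.

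The cleanest route, then, is: since the total word is a loop with $\overline{u}\,\overline{v}=1$ and $l(u)+l(v)=2n<\gamma(W)$, I would argue that \emph{each} of the two halves must be non-reduced. Indeed if $u$ were reduced then $\overline u$ would be a nontrivial element whose inverse $\overline v^{-1}$ has a reduced representative of length $l(u)$; comparing with the reduced/non-reduced status of $v$ via Corollary~\ref{girth}(1) pins everything down and yields the contradiction unless a first-type reduction (a substring $ss$) is available inside that half. Applying part~(2) separately to $u$ and to $v$ then yields one repeated letter $s_i=s_{i+1}$ with $1\leq i\leq n-1$ (inside $u$) and one with $n+1\leq j\leq 2n-1$ (inside $v$), and these indices are automatically distinct since they lie in disjoint ranges. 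The cyclic indexing in $\Z/2n\Z$ also allows the repetition to straddle the cut points $s_{2n}s_1$ or $s_n s_{n+1}$, which I would absorb by choosing the two cut points generically or by a small case check.

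The main obstacle I anticipate is the bookkeeping needed to guarantee the two coincidences are genuinely \emph{distinct} rather than the same repeated pair counted twice, and to handle the boundary substrings created by cutting the cyclic word. The underlying reduction-theory input is entirely supplied by Corollary~\ref{girth}; the real work is combinatorial—partitioning the $2n$ cyclic positions so that each linear piece stays below the girth bound $\gamma(W)/2$ and independently forces a first-type reduction. I expect no further appeal to the Tits Theorem beyond what is already packaged in Corollary~\ref{girth}.
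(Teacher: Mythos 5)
Your setup is the same as the paper's: cut the cyclic loop into $u=s_1\dots s_n$ and the reversed second half $\tilde v=s_{2n}\dots s_{n+1}$, note $\overline u=\overline{\tilde v}$ with both lengths $n<\gamma(W)/2$, and apply Corollary \ref{girth}. The non-reduced branch of your argument is fine: since $\overline u=\overline{\tilde v}$ and the lengths agree, $u$ and $\tilde v$ are reduced or not simultaneously, and when both are non-reduced, part (2) of Corollary \ref{girth} gives an $ss$ substring strictly inside each half, hence two coincidences $s_i=s_{i+1}$ with $1\leq i<n$ and $s_j=s_{j+1}$ with $n\leq j<2n$, automatically distinct.

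The gap is in the reduced branch. You claim that $u=\tilde v$ with both reduced ``cannot happen for a genuine loop'' and that a contradiction can be extracted; this is false, and no such contradiction exists. Having $\overline u\neq 1$ is perfectly compatible with $\overline w=\overline u\,\overline v=\overline u\,\overline u^{-1}=1$. A concrete counterexample to your claim: in the infinite dihedral group on $\{s,t\}$ (so $\gamma(W)=\infty$), the cyclic word $w=stts$ is a loop of length $4$, and here $u=st$ and $\tilde v=s_4s_3=st$ are both reduced and equal. So you cannot conclude that ``both words must be non-reduced,'' and your plan of always harvesting an $ss$ substring from each half breaks down. The correct handling of this case --- which is what the paper does --- is not a contradiction but a direct read-off: $u=\tilde v$ means $s_i=s_{2n+1-i}$ for all $i$, and taking $i=n$ and $i=1$ gives exactly the two required coincidences $s_n=s_{n+1}$ and $s_{2n}=s_1=s_{2n+1}$, sitting at the two cut points of the cyclic word. (This also resolves your worry about repetitions straddling the cuts: they only do so in the reduced case, where they are precisely the desired conclusion.)
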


\begin{proof} Set $u=s_1s_2\dots s_n$ and $v=s_{2n}s_{2n-1}s_{n+1}$. We have $\overline{u}=\overline{v}$ and $l(u)=l(v)=n<\gamma(W)/2$. It follows that $u$ is reduced iff  $v$ is reduced.

If both $u$ and $v$ are reduced, it follows from the first assertion of Corollary \ref{girth} that $u=v$, hence we have $s_n=s_{n+1}$ and    $s_{2n}=s_{2n+1}$.

Otherwise, we can transform $u$ and $v$ into  reduced words by a sequence of elementary reductions of the first type. By the second assertion of Corollary \ref{girth},  there exist $i,j$ with $1\leq i<n$ and $n\leq j<2n$ such that $s_i=s_{i+1}$ and $s_j=s_{j+1}$. 

In both cases, the corollary is proved.
\end{proof}

\bigskip\noindent
{\it 2.5 Parabolic subgroups} 

\noindent For a subset $I$ of $S$, let $W_I\subset W$ be the subgroup  generated by $I$, let 
$\mathcal{W}_I$ be the set of words on the alphabet $I$. The subgroups $W_I$ are called the {\it parabolic subgroups} of $W$.  It is well-known that $(W_I,I)$ is a Coxeter system, with Coxeter matrix $(m_{s,t})_{s,t\in I}$, see e.g. \cite{Da} Section 4.1.

It is clear from  Theorem \ref{T68} that any reduced expression of an element $w\in W_I$ is in $\mathcal{W}_I$. It follows that $W_I\cap W_J=W_{I\cap J}$ for any two subsets $I,J$ of $S$. Given $w\in W$, the smallest subset $I$ such that $w\in W_I$ is called the {\it support} of $w$.

For a subset $I$ in $S$ and $t\in S$, set 

\centerline{$I(t):=\{s\in I\mid m_{s,t}=2\}
=\{s\in I\mid s\neq t \,\mathrm{and}\, st=ts\}$, and}

\centerline{$t^{W_I}=\{t^w\mid w\in W_I\}$,}

\noindent where, as usual, $t^w:=wtw^{-1}$.

\begin{lemma}\label{parabolic} Let $I$ be a subset of $S$. 
\begin{enumerate}
\item For any $t\in S\setminus I$,  the centraliser of $t$ in $W_I$ is $W_{I(t)}$. In particular, we have $t^{W_I}\simeq W_I/W_{I(t)}$.

\item For elements $t\neq t'$ of $S\setminus I$, we have $t^{W_I}\cap t'^{W_I}=\emptyset$.
\end{enumerate}
\end{lemma}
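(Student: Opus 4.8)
The plan is to prove both assertions using reduced words together with Theorem \ref{T68}, relying throughout on the fact noted just above that every reduced expression of an element of $W_I$ lies in $\mathcal{W}_I$. A remark used repeatedly is that for $w\in W_I$ and $t\in S\setminus I$ one has $l(tw)=l(wt)=l(w)+1$: indeed $l(tw)$ differs from $l(w)$ by $\pm 1$, and $l(tw)=l(w)-1$ would force $w$ to have a reduced expression beginning with $t$, impossible since that expression lies in $\mathcal{W}_I$ and $t\notin I$; the case of $wt$ is symmetric. For assertion (2) this already suffices. Suppose $vtv^{-1}=t'$, i.e. $vt=t'v$, for some $v\in W_I$ and $t\ne t'$ in $S\setminus I$. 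Writing $v=s_1\cdots s_m$ reduced ($s_i\in I$), the words $s_1\cdots s_m t$ and $t's_1\cdots s_m$ are both reduced by the remark and represent the same element $u:=vt$, hence have the same support (the set of letters occurring in a reduced expression is exactly the smallest $J$ with $u\in W_J$, a well-defined invariant). On one side the support is $\{t\}\cup\mathrm{supp}(v)$, on the other $\{t'\}\cup\mathrm{supp}(v)$; since $\mathrm{supp}(v)\subseteq I$ while $t,t'\notin I$, comparing the two forces $t=t'$, a contradiction. This proves (2).

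For assertion (1), the inclusion $W_{I(t)}\subseteq C_{W_I}(t)$ is clear, and $t^{W_I}\simeq W_I/W_{I(t)}$ will follow from the reverse inclusion by orbit--stabiliser. I would prove $C_{W_I}(t)\subseteq W_{I(t)}$ by induction on $l(w)$, where $w\in W_I$ satisfies $wt=tw$. The case $w=1$ is trivial. For the inductive step, fix a reduced expression $w=s_1\cdots s_n$ with $s_i\in I$; the key point is that $s_1\in I(t)$, i.e. the first letter commutes with $t$. Granting this, $w':=s_1w=s_2\cdots s_n\in W_I$ again satisfies $w't=tw'$ (using $s_1t=ts_1$), and since $l(w')<l(w)$ the induction hypothesis gives $w'\in W_{I(t)}$, whence $w=s_1w'\in W_{I(t)}$.

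To see that $s_1\in I(t)$, consider $u:=tw=wt$, which by the remark has both $s_1\cdots s_n t$ and $ts_1\cdots s_n$ as reduced expressions of length $n+1$. Both $t$ and $s_1$ are left descents of $u$: left multiplication by $t$ yields $w$ of length $n$, and left multiplication by $s_1$ yields $s_1wt=(s_2\cdots s_n)t$, again of length $n$ since $s_2\cdots s_n\in W_I$ and $t\notin I$. Now suppose $m_{s_1,t}\ge 3$. By the exchange condition, a standard consequence of Theorem \ref{T68}, two distinct left descents $t,s_1$ of $u$ force $m_{s_1,t}<\infty$ and make the longest element of the dihedral group $W_{\{s_1,t\}}$ a left factor of $u$; in particular $u$ has a reduced expression beginning with the alternating string $ts_1t$. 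Cancelling the leading $t$, the element $w=tu$ then has a reduced expression beginning with $s_1t$, which contains the letter $t\notin I$, contradicting that reduced expressions of $w\in W_I$ lie in $\mathcal{W}_I$. Hence $m_{s_1,t}=2$, i.e. $s_1\in I(t)$, and the induction closes.

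The main obstacle is precisely this last step, that the first letter of a reduced expression of $w$ commutes with $t$; everything else is bookkeeping with lengths and supports. I would isolate it as the only place requiring more than Theorem \ref{T68} directly, namely the two-descents form of the exchange condition. An alternative, if one prefers to avoid that lemma, is to argue geometrically via the Tits representation: $wtw^{-1}=t$ is equivalent to $w(\alpha_t)=\pm\alpha_t$, and for $w\in W_I$ the vector $w(\alpha_t)$ equals $\alpha_t$ plus a combination of the $\alpha_s$ with $s\in I$, hence is a positive root, forcing $w(\alpha_t)=\alpha_t$; one then identifies the stabiliser of $\alpha_t$ in $W_I$ with $W_{I(t)}$. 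The combinatorial route seems preferable here, as it stays within the word machinery already set up.
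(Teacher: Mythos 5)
Your proof is correct. For assertion (2) you argue essentially as the paper does: the two reduced expressions $s_1\cdots s_m t$ and $t's_1\cdots s_m$ of $vt=t'v$ have the same support, which forces $t=t'$. For assertion (1) you take a genuinely different route. The paper invokes the decomposition $w=uv$ with $u\in W_I^{I(t)}:=\{w\in W_I\mid l(ws)>l(w)\ \forall s\in I(t)\}$ and $v\in W_{I(t)}$ (citing Davis, Section 4.5), and shows that a nontrivial minimal coset representative $u$ cannot centralise $t$ because every reduced expression of $ut$ ends in $t$ while $tu$ admits one that does not. You instead induct on $l(w)$, peeling off the first letter $s_1$ after showing $s_1\in I(t)$; your key input is the two-descents form of the exchange condition (two distinct left descents $t,s_1$ of $u=wt$ force the longest element of $W_{\{s_1,t\}}$ to be a left factor of $u$), which rules out $m_{s_1,t}\geq 3$ since that would insert the letter $t$ into a reduced expression of $w\in W_I$. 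Each argument leans on exactly one standard Coxeter-theoretic fact beyond Theorem \ref{T68} as literally stated --- the paper on the parabolic coset decomposition, you on the exchange condition --- so neither is more elementary; your induction avoids introducing $W_I^{I(t)}$ at the cost of the descent lemma, and you correctly isolate that lemma as the one non-formal step. The alternative you sketch via the Tits representation (positivity of $w(\alpha_t)$ for $w\in W_I$) is also valid, but as you note it imports machinery the paper only sets up later, so the combinatorial route is the better fit here.
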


This lemma appears to be well-known. Since we did not find an exact reference, we provide a quick proof.

\bigskip
\noindent{\it Proof of Assertion (1).} Set 

\centerline{$W_I^{I(t)}=\{w\in W_I\mid l(ws)>l(w) \,\forall s\in I(t)\}$.}

\noindent Let $w\in W_I^{I(t)}$ with $w\neq 1$ and let $w=s_1\dots s_n$ be any reduced expression for $w$. Since $t$ is not in the support of $w$, the word $s_1\dots s_nt$ is reduced. Since $s_n\notin I(t)$, no reduction of second type involves $t$. Therefore any reduced expression of $wt$ ends with the letter $t$, therefore 

\centerline{$wt\neq tw$.}

By Section 4.5 of \cite {Da},  any  $w\in W_I$ can be uniquely written as $w=uv$, where $u\in W_I^{I(t)}$ and
$u\in W_{I(t)}$. Thus the previous statement is equivalent to Assertion (1).

\bigskip
\noindent{\it Proof of Assertion (2)} Let $t\neq t'$ be elements in $S\setminus I$. The support of any element in  $w\in t^{W_I}$ (resp. $w\in t'^{W_I}$) contains $t$ but not $t'$ (resp. contains $t'$ but not $t$). Therefore $t^{W_I}$ and $t'^{W_I}$ are disjoint.
\qed

\bigskip\noindent
{\it 2.6 The  Cayley complex  $\Cay^+\,W$}

\noindent By definition, the {\it Cayley graph} of $W$, denoted $\Cay\, W$, is the graph whose vertices are the elements $v\in W$ and whose edges are the pairs $(v,vs)$, for $v\in W$ and $s\in S$. We will now define the {\it Cayley complex} 
$\Cay^+\,W$ by attaching some 2-cells to $\Cay\, W$.

 Let $\mathfrak{P}$ be the set of pairs  $I=\{s,t\}$ of commuting elements of $S$.  For $I\in \mathfrak{P}$, any
 $W_I$-coset $v W_I$ is a subgraph of $\Cay\,W$ consisting of a $4$-cycle,  which can be seen as the boundary of a plain 
 square ${\bf c}(v,I)$. An example is the square
 ${\bf c}_1$ shown in Figure \ref{Figure1SystoleStory}. Therefore we can attach the $2$-cell  ${\bf c}(v,I)$ along $\partial{\bf c}(v,I)$ to  the Cayley graph $\Cay\,W$. By definition, the {\it Cayley complex} $\Cay^+\,W$ is the $2$-dimensional complex obtained by attaching the $2$-cells  ${\bf c}(v,I)$, where $I$ varies over $\mathfrak{P}$ and $v$ varies over a set of representatives of $W/W_I$.  
 
It remains to add one remark to complete the definition of $\Cay^+\,W$. The group $W_I$ acts (by the right action) of the coset $v W_I$, and this action can be extended to the cell ${\bf c}(v,I)$. The two generators $s$ and $t$ of $W_I$ are  the median reflections of the square ${\bf c}(v,I)$. We require that the 2-cells ${\bf c}(v,I)$ are glued compatibly with the $W_I$-action. It follows that  the $W$-action on $\Cay\,W$ extends naturally to $\Cay^+\,W$.
 
Note that any word $w=s_1s_2\dots s_n$ in $\mathcal{W}_S$ defines a path, denoted by $\vert w\vert$, in the Cayley graph $\Cay\,W$. The $n$ successive edges of $\vert w\vert$ are

\centerline{ $(1,s_1), (s_1,s_1s_2)\dots (s_1s_2\dots s_{n-1},s_1s_2\dots s_n)$.}

Let $v\in W$. By definition the path $\vert w\vert$ is based at $1$ and the path $v.\vert w\vert$ is based at $v$. 
By $W$-equivariance, it is clear that  

\begin{itemize}
\item for any $w_1,w_2\in \mathcal{W}_S$, the paths $\vert w_1\vert$ and $\vert w_2\vert$ are homotopic in $\Cay^+\, W$ iff $v.\vert w_1\vert$ and $v.\vert w_2\vert$ are homotopic, and

\item for any loop $w\in\mathcal{W}_S$, $\vert w\vert$  is null-homotopic in $\Cay^+\, W$ iff $v.\vert w\vert$ is. 
\end{itemize}

\noindent Therefore the next statement and its proof only involve loops based at $1$.

For $s\in S$ and $I\subset S$, set

\centerline{$I(s)=\{t\in I\mid m_{s,t}=2\}$.}

\begin{lemma}\label{homotopic} Let $u\in \mathcal{W}_{I(s)}$ be a reduced word. Then the paths $\vert sus\vert$ and $\vert u\vert$ are homotopic in  $\Cay^+\,W$.
\end{lemma}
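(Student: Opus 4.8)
The plan is to exhibit an explicit sequence of elementary homotopies carrying $\vert sus\vert$ to $\vert u\vert$, using the commutation $2$-cells of $\Cay^+\,W$ together with the obvious null-homotopy of a backtrack. First I would record the basic building block: if $s,t\in S$ commute and $v\in W$, then the coset $vW_{\{s,t\}}$ is a genuine $4$-cycle $v,vs,vst,vt$, so the two length-two paths $v\to vs\to vst$ and $v\to vt\to vts=vst$ from $v$ to $vst$ together bound the attached $2$-cell ${\bf c}(v,\{s,t\})$ and are therefore homotopic in $\Cay^+\,W$. I will call this replacement a \emph{commutation move}. A second, purely graph-theoretic, ingredient is that a path which traverses an edge and immediately traverses it backwards is homotopic rel endpoints to the constant path; this is what lets one delete a leading $ss$.

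Before running the argument I would check that the two paths share endpoints. Writing $u=t_1\cdots t_n$ with each $t_i\in I(s)$, every letter of $u$ commutes with $s$, hence $\overline u$ commutes with $s$ and $\overline{sus}=s\,\overline u\,s=\overline u$; thus both $\vert sus\vert$ and $\vert u\vert$ run from $1$ to $\overline u$, and it makes sense to ask for a homotopy rel endpoints.

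The core of the proof is then to slide the trailing $s$ of $sus=s\,t_1\cdots t_n\,s$ leftwards, one letter at a time. At the $k$-th stage the word reads $s\,t_1\cdots t_{n-k}\,s\,t_{n-k+1}\cdots t_n$; setting $v=\overline{s\,t_1\cdots t_{n-k}}$, the relevant portion of the path is $v\to v t_{n-k+1}\to v t_{n-k+1}s$, and a single commutation move across ${\bf c}(v,\{t_{n-k+1},s\})$ replaces it by $v\to vs\to vst_{n-k+1}$, advancing $s$ one step to the left. After $n$ such moves the word is $ss\,t_1\cdots t_n=ss\,u$, whose path begins with the backtrack $1\to s\to 1$; deleting this backtrack by the null-homotopy above leaves exactly $\vert u\vert$. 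Equivalently, one may induct on $n$: the case $n=0$ is just the backtrack $\vert ss\vert\simeq$ constant, and the inductive step peels off the last letter $t_n$, applies one commutation move to pass from $s\,u'\,t_n\,s$ to $s\,u'\,s\,t_n$, and then appends the common edge $t_n$ to the homotopy $\vert s u' s\vert\simeq\vert u'\vert$ supplied by the inductive hypothesis.

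I do not expect a genuine obstacle here: the only points needing care are bookkeeping ones, namely identifying at each stage the correct $2$-cell ${\bf c}(v,\{t_i,s\})$ and checking that its boundary is exactly the pair of length-two subpaths being interchanged. Note in particular that the reducedness of $u$ is never used — the homotopy is built purely from the commutation relations $st_i=t_is$ — so that hypothesis serves only to match the form in which the lemma is applied later.
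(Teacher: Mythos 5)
Your argument is correct and uses essentially the same idea as the paper: the $2$-cells you invoke at each stage, ${\bf c}(\overline{s\,t_1\cdots t_{n-k}},\{s,t_{n-k+1}\})$, are exactly the cells ${\bf c}(t_1\cdots t_{i-1},\{s,t_i\})$ that the paper stacks into a single rectangle with bottom edge $\vert u\vert$ and remaining boundary $\vert sus\vert$; you merely traverse that rectangle one square at a time (sliding the trailing $s$ left and then cancelling the backtrack) instead of reading off the homotopy from the rectangle all at once. Your side remark that reducedness of $u$ is not actually needed is also consistent with the paper's proof, which likewise only uses the commutation relations.
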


\begin{proof} By definition, we have $u=t_1\dots t_n$ where all $t_i$ commute with $s$ and $n=l(u)$. For each integer $i$ with $1\leq i\leq n$, let  $I_i=\{s,t_i\}$ and ${\bf c_i}={\bf c}(t_1t_2\dots t_{i-1},I_i)$. Then ${\bf c_1}\cup{\bf c_2}\dots\cup {\bf c_n}$ is a rectangle of $\Cay^+\,W$. As shown in Figure \ref{Figure1SystoleStory}, the lower side of the rectangle is the path $\vert t_1t_2\dots t_n\vert$ and the three other sides represent the path $\vert s t_1t_2\dots t_n s\vert$. It follows that inside $\Cay^+\,W$, the path $\vert sus\vert$ and $\vert u\vert$ are homotopic.
\end{proof}

%\vfill\eject

\begin{figure}
\centering
\includegraphics[width=0.8\textwidth]{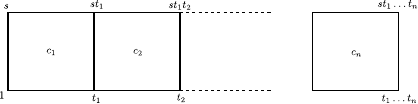}
\caption{The rectangle ${\bf c_1}\cup{\bf c_2}\dots\cup {\bf c_n}$ in $\Cay^+\,W$.}
\label{Figure1SystoleStory}
\end{figure}

\bigskip\noindent
{\it 2.7 Gal's Theorem}  
 
\noindent Following \cite{Ga}, a partition $S=R\sqcup B$ of $S$ is called a {\it Gal's partition} if
$m_{s,t}$ is an even integer or is infinite for any $s\in R$ and $t\in B$. For a Gal's partition $S=R\sqcup B$, there are group homorphisms $\mu_R: W\to W_R$ and
$\mu_B:W\to W_B$ uniquely defined by 

\centerline{$\mu_R(s)=s$ if $s\in R$ and $\mu_R(s)=1$ if $s\not\in R$,}

\centerline{$\mu_B(s)=1$ if $s\in B$ and $\mu_B(s)=s$ if $s\not\in B$.}

\noindent Set

\centerline{$\Ker \mu_R:=\overline{W}_B$.}

\noindent The notation $\overline{W}_B$ is intended to emphasise that  $\Ker \mu_R$ is the normal closure of $W_B$
\cite{Ga}. Set

\centerline{$\overline B=\cup_{t\in B}\,t^{W_R}$.}
 
\begin{thm}\label{Gal} (Gal) Let $S=R\sqcup B$ be a Gal's partition.

Then the pair $(\overline{W}_B,\overline{B})$ is a Coxeter system, possibly of infinite rank. Moreover, its Coxeter matrix
$(m_{\sigma,\tau})_{\sigma,\tau\in \overline{B}}$ is defined by

\begin{enumerate}
\item If $\sigma=s^w$ and $\tau=t^w$ for some $s,t\in B$ and $w\in W_R$, then  $m_{\sigma,\tau}=m_{s,t}$,

\item If $\sigma=t^w$ and $\tau=t^{ws}$ for some $t\in B$, $s\in R$ and $w\in W_R$, then  $m_{\sigma,\tau}=m_{s,t}/2$, and

\item otherwise, we have $m_{\sigma,\tau}=\infty$.
\end{enumerate}
\end{thm}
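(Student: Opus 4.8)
The plan is to realise $W$ itself as a semidirect product $\widehat W\rtimes W_R$, where $\widehat W$ is the abstract Coxeter group built from the generating set $\overline B$ and the matrix described in the statement, and then to deduce the theorem by comparing this decomposition with the decomposition $W=\overline W_B\rtimes W_R$. This way the Coxeter property of $(\overline W_B,\overline B)$ is never checked by hand; it is transported from $W$.

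First I would record the structural facts needed. Since $\mu_R$ restricts to the identity on $W_R$, the sequence $1\to\overline W_B\to W\xrightarrow{\mu_R}W_R\to 1$ splits, giving $W=\overline W_B\rtimes W_R$ with $W_R$ acting by conjugation. The subgroup $\langle\overline B\rangle$ is normalised by $W_R$ (which permutes $\overline B$ by conjugation) and contains $W_B$, hence is normal in $W$ and contains the normal closure $\overline W_B$; as each element of $\overline B$ lies in $\Ker\,\mu_R$, it equals $\overline W_B$. Using Lemma \ref{parabolic}, the set $\overline B$ is parametrised as $\bigsqcup_{t\in B}W_R/W_{R(t)}$: the fibre over $t$ is $t^{W_R}\simeq W_R/W_{R(t)}$ by part (1), and the union is disjoint by part (2). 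In particular every $\sigma\in\overline B$ is conjugate to a unique element of $B$, a fact I use repeatedly.

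Next I would verify that the matrix $(m_{\sigma,\tau})$ of the statement is well defined and $W_R$-invariant. Its two nontrivial clauses just compute the order of $\sigma\tau$: in case (1) one has $\sigma\tau=(st)^w$, of order $m_{s,t}$, and in case (2) the product $\sigma\tau$ is conjugate to $(ts)^2$, of order $m_{s,t}/2$ precisely because $m_{s,t}$ is even (this is where the Gal hypothesis enters). Since the order of $\sigma\tau$ is representation-independent, and the two cases are disjoint for $\sigma\ne\tau$ (the underlying element of $B$ being unique by the disjointness above), the recipe assigns a well-defined value, plainly unchanged under simultaneous $W_R$-conjugation of $\sigma,\tau$. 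Hence $W_R$ acts on the abstract Coxeter group $\widehat W=\langle\, x_\sigma\ (\sigma\in\overline B)\mid x_\sigma^2,\,(x_\sigma x_\tau)^{m_{\sigma,\tau}}\,\rangle$ (of possibly infinite rank) through $x_\sigma\mapsto x_{\sigma^r}$, and I may form $\widehat W\rtimes W_R$. The heart of the proof is then to show that $\Psi:\widehat W\rtimes W_R\to W$, $x_\sigma\mapsto\sigma$, $r\mapsto r$, is an isomorphism. Surjectivity is clear. For an inverse I check that the Coxeter relations of $(W,S)$ hold among the images of $S$: relations inside $R$ hold because $W_R$ sits in the product; relations between two elements of $B$ are exactly the case-(1) relations of $\widehat W$; and for $s\in R$, $t\in B$ a direct computation in the semidirect product gives $(s\,x_t)^2=x_{t^s}x_t$, which by the case-(2) clause has order $m_{s,t}/2$, so $(s\,x_t)^{m_{s,t}}=1$. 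The universal property of $(W,S)$ then yields $\Phi:W\to\widehat W\rtimes W_R$, and $\Phi,\Psi$ are mutually inverse on generators, so $\Psi$ is an isomorphism carrying $\widehat W$ onto $\Ker\,\mu_R=\overline W_B$. Thus $(\overline W_B,\overline B)$ is a Coxeter system with the asserted matrix, and the ``otherwise $=\infty$'' clause of case (3) comes for free: in the Coxeter group $\widehat W\cong\overline W_B$ the order of $\sigma\tau$ equals the matrix entry, which is infinite in every remaining configuration.

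The main obstacle I anticipate is not the isomorphism but the bookkeeping underlying well-definedness: pinning down, via Lemma \ref{parabolic}, that cases (1) and (2) are genuinely disjoint and that the case-(2) value is independent of the chosen $s\in R$. Handling this carefully—together with the clean reduction of the cross relation to the identity $(s\,x_t)^2=x_{t^s}x_t$, which is exactly the place where the evenness in the Gal partition is used—is what makes the argument go through without ever computing an infinite order directly.
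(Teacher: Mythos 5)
Your argument is correct, but note that the paper does not prove this statement at all: it is quoted from Gal's paper (the text points to Proposition~2.1 and Corollary~3.1 of \cite{Ga}), so any proof you give is necessarily an independent route. What you propose is the standard ``presentation comparison'' proof: build the abstract Coxeter group $\widehat W$ on $\overline B$ with the prescribed matrix, form $\widehat W\rtimes W_R$ using the permutation action of $W_R$ on $\overline B$, and produce mutually inverse homomorphisms with $W$ by checking relations on generators; the only nontrivial relation is the cross relation, where $(s\,x_t)^2=x_{t^s}x_t$ together with $\ord(x_{t^s}x_t)=m_{s,t}/2$ uses exactly the evenness from the Gal hypothesis. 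This is a complete and self-contained argument, and your use of Lemma~\ref{parabolic} to parametrise $\overline B$ as $\bigsqcup_{t\in B}W_R/W_{R(t)}$ correctly settles the well-definedness and disjointness of cases (1) and (2). Two small points you should make explicit if you write this up: (i) in case (2) with $m_{s,t}=2$ one has $t^s=t$, so $\sigma=\tau$ and the entry $m_{s,t}/2=1$ must be read as a diagonal entry (the paper itself has to deal with this degeneracy separately, in Lemma~\ref{bargirth}); (ii) your conclusion for case (3), and the exactness of the orders in cases (1) and (2), rest on the classical fact that in any Coxeter system the order of a product of two generators equals the corresponding matrix entry (valid also in infinite rank, e.g.\ via the Tits representation); this deserves a citation since for the well-definedness of $\Psi$ you only need divisibility, but for clause (3) you need equality.
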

 
For the proof, see Proposition 2.1 and Corollary 3.1 of \cite{Ga}. 

\bigskip\noindent
{\it 2.8 Right-angled partitions}  

\noindent In order to use Gal's Theorem, we will restrict ourselves to a certain type of Gal's partitions. Recall that a Coxeter group is called {\it right-angled} if the non-diagonal entries of its Coxeter matrix are $2$ or $\infty$. By analogy, a partition $S=R\sqcup B$ of $S$  will be called a {\it right-angled} partition if $m_{s,t}=2$ or $\infty$, for any $s\in R$ and $t\in B$. 

\begin{lemma}\label{bargirth} Let $S=R\sqcup B$ be a right-angled  partition. Then we have

\centerline{$\gamma(\overline{W}_B)=
\gamma(W_B)$.}
\end{lemma}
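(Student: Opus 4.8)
The plan is to read off the girth of $\overline{W}_B$ directly from Gal's description of its Coxeter matrix, and to show that the right-angledness hypothesis forces this matrix to have the same finite off-diagonal entries as the Coxeter matrix of $W_B$. First I would note that a right-angled partition is in particular a Gal's partition, so Theorem \ref{Gal} applies and $(\overline{W}_B,\overline{B})$ is a Coxeter system whose matrix $(m_{\sigma,\tau})$ is given by the three cases listed there. Since $\gamma(\overline{W}_B)=2\Min_{\sigma\neq\tau}m_{\sigma,\tau}$ and $\gamma(W_B)=2\Min_{s\neq t\in B}m_{s,t}$, it suffices to prove that the two sets of finite off-diagonal entries, namely $\{m_{\sigma,\tau}\mid \sigma\neq\tau\in\overline{B},\ m_{\sigma,\tau}<\infty\}$ and $\{m_{s,t}\mid s\neq t\in B,\ m_{s,t}<\infty\}$, coincide.

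The inclusion of the second set in the first is immediate: for distinct $s,t\in B$ with $m_{s,t}<\infty$, taking $w=1$ in case (1) of Theorem \ref{Gal} gives $\sigma=s\neq t=\tau$ in $\overline{B}$ with $m_{\sigma,\tau}=m_{s,t}$. For the reverse inclusion I would go through the three cases. Case (1) produces only values $m_{s,t}$ with $s,t\in B$, which already lie in the second set. The crucial point---and the one place where right-angledness is used---is case (2), which a priori produces the value $m_{s,t}/2$ for $s\in R$, $t\in B$; here I would invoke the hypothesis $m_{s,t}\in\{2,\infty\}$. When $m_{s,t}=\infty$ this entry is infinite and contributes nothing; when $m_{s,t}=2$ the generators $s$ and $t$ commute, so a short computation using $s^2=1$ and $st=ts$ gives $t^{ws}=w(sts)w^{-1}=wtw^{-1}=t^w$, i.e.\ $\sigma=\tau$. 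Thus case (2) only ever contributes diagonal entries, never a finite off-diagonal one. Finally case (3) contributes $\infty$.

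Putting these together, every finite off-diagonal entry of $\overline{W}_B$ already occurs as $m_{s,t}$ for some distinct $s,t\in B$, so the two sets of finite off-diagonal values are equal (and both empty exactly when the two girths are infinite). Taking the minimum of each and multiplying by $2$ yields $\gamma(\overline{W}_B)=\gamma(W_B)$. I expect the only real subtlety to be the verification in case (2) that $m_{s,t}=2$ forces $t^{ws}=t^w$; everything else is bookkeeping with Gal's matrix and the definition of girth. One should also keep in mind the degenerate case where $W_B$ has no finite off-diagonal entry, in which case both girths equal $\infty$ and the statement holds trivially.
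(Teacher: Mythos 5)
Your proof is correct and follows essentially the same route as the paper: both arguments read the girth off Gal's description of the Coxeter matrix of $\overline{W}_B$ and use right-angledness to show that case (2) of Theorem \ref{Gal} only ever produces diagonal entries (since $m_{s,t}=2$ forces $t^{ws}=t^w$), leaving only entries of the form $m_{s,t}$ with $s,t\in B$ or $\infty$. Your write-up is in fact slightly more careful than the paper's in explicitly checking, via the $w=1$ case, that every finite off-diagonal entry of $W_B$ actually occurs in $\overline{W}_B$.
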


\begin{proof} Let $\sigma,\tau\in\overline B$. If $\sigma=t^w$,  $\tau=t^{ws}$ for some $t\in B$, $s\in R$ and $w\in W_R$ such that $m_{s,t}$ is finite, we have $m_{s,t}=2$ and $\sigma=\tau$. Hence $m_{\sigma,\tau}$ is a diagonal entry of the Coxeter matrix of $\overline{W_B}$. Otherwise, by Theorem \ref{Gal}, we have $m_{\sigma,\tau}=m_{s,t}$ for some
$s, t\in B$ or $m_{\sigma,\tau}=\infty$. It follows that $\gamma(\overline{W}_B)=\gamma(W_B)$.
\end{proof}

\bigskip\noindent
{\it 2.9 Loops for Coxeter groups with a right-angled partition} 

\noindent Let $I$ be a subset of $S$. For any word or cyclic word $w=s_1s_2\dots s_n\in \mathcal{W}_S$, let $l_I(w)$ be the number of its letters in $I$. Therefore for any partition $S=R\sqcup B$ of $S$, we have

\centerline{$l(w)=l_R(w)+l_B(w)$.}

\begin{lemma}\label{shortpath} Let $S=R\sqcup B$ be a right-angled partition. Let $w$ be a cyclic loop on the alphabet $S$ such that $l_R(w)<\gamma(W_R)$ and $l_B(w)<\gamma(W_B)$. Then one of the following assertions holds

\begin{enumerate}
\item $w$ contains a substring $sus$, where $s\in B$ and $u$ is a reduced word on the alphabet $B(s)$, or

\item $w$ contains a substring $tt$, where $t\in R$.
\end{enumerate}
\end{lemma}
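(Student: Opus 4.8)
The plan is to transport the cyclic loop $w$ into the Coxeter system $(\overline{W}_B,\overline{B})$ of Gal's Theorem~\ref{Gal} and to read the desired substring off a repeated letter there. Write the $B$-letters of $w$, in cyclic order, as $b_1,\dots,b_M$, separated by the (possibly empty) $R$-words $\rho_1,\dots,\rho_M\in\mathcal{W}_R$, so that as a cyclic word $w=b_1\rho_1b_2\rho_2\cdots b_M\rho_M$ (here $\rho_M$ is the gap joining $b_M$ back to $b_1$). If $M=0$, then $w$ is a cyclic loop on the alphabet $R$ of length $l_R(w)<\gamma(W_R)$, so Corollary~\ref{cyclicgirth} already provides a substring $tt$ with $t\in R$, which is assertion~(2). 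Hence assume $M\geq1$.

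First I would assemble the loop in $\overline{W}_B$. Put $c_i=\mu_R(b_1\rho_1\cdots b_{i-1}\rho_{i-1})\in W_R$ and $\sigma_i=c_ib_ic_i^{-1}=b_i^{\,c_i}$. Since $\mu_R(b_i)=1$ we have $c_{i+1}=c_i\overline{\rho_i}$, and each $\sigma_i$ lies in $\overline{B}=\bigcup_{t\in B}t^{W_R}$; a telescoping computation using $c_{M+1}=\mu_R(\overline{w})=1$ gives $\sigma_1\sigma_2\cdots\sigma_M=\overline{w}=1$. Thus $W'=\sigma_1\cdots\sigma_M$ is a cyclic loop in $(\overline{W}_B,\overline{B})$ (a Coxeter system of possibly infinite rank, for which Corollaries~\ref{girth} and~\ref{cyclicgirth} remain valid) of length $M=l_B(w)$. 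Being a loop, $M$ is even, so $M\geq2$; and by Lemma~\ref{bargirth}, $M<\gamma(W_B)=\gamma(\overline{W}_B)$. Corollary~\ref{cyclicgirth} then supplies two \emph{distinct} indices $i\neq j$ with $\sigma_i=\sigma_{i+1}$ and $\sigma_j=\sigma_{j+1}$.

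Next I would decode one repeat $\sigma_i=\sigma_{i+1}$. As $\sigma_i\in b_i^{W_R}$ and $\sigma_{i+1}\in b_{i+1}^{W_R}$, Lemma~\ref{parabolic}(2) forces $b_i=b_{i+1}=:b$; substituting $c_{i+1}=c_i\overline{\rho_i}$ then yields $\overline{\rho_i}\,b\,\overline{\rho_i}^{-1}=b$, i.e. $\overline{\rho_i}$ lies in the centraliser of $b$ in $W_R$, which is $W_{R(b)}$ by Lemma~\ref{parabolic}(1). So $w$ contains the substring $b\rho_i b$ with $\overline{\rho_i}\in W_{R(b)}$. The reason for producing \emph{two} repeats is now the crucial point: the gaps $\rho_i$ and $\rho_j$ are disjoint substrings of $w$, whence $l(\rho_i)+l(\rho_j)\leq l_R(w)<\gamma(W_R)$, and after relabelling I may assume $l(\rho_i)<\gamma(W_R)/2$. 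If $\rho_i$ is reduced, then since $\overline{\rho_i}\in W_{R(b)}$ every reduced expression of it lies in $\mathcal{W}_{R(b)}$, so $\rho_i$ is a reduced word on $R(b)$ and $b\rho_i b$ is exactly the substring $sus$ of assertion~(1), with $s=b\in B$ and $u=\rho_i$ reduced on $R(b)$ (the empty case giving $bb$). If $\rho_i$ is not reduced, then $l(\rho_i)<\gamma(W_R)/2$ lets me invoke Corollary~\ref{girth}(2) inside $W_R$, producing a substring $tt$ with $t\in R$ of $\rho_i$, hence of $w$, which is assertion~(2).

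The step I expect to be the main obstacle is precisely the one that makes the girth hypothesis on $R$ usable. A single repeated letter of $W'$ only yields a substring $b\rho_i b$ in which the intervening $R$-word $\rho_i$ could be long and non-reduced, matching neither assertion directly; the element identity $\overline{\rho_i}\in W_{R(b)}$ is not enough by itself, because the conclusion must be about the literal cyclic word $w$. The resolution is to exploit that Corollary~\ref{cyclicgirth} guarantees \emph{two} disjoint repeats, forcing at least one intervening $R$-word below $\gamma(W_R)/2$; only then does Corollary~\ref{girth} adjudicate between ``$\rho_i$ reduced on $R(b)$'' and ``$\rho_i$ contains a doubled letter''. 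A minor but necessary piece of bookkeeping is to match the repeat $\sigma_i=\sigma_{i+1}$ with the actual contiguous substring $b\rho_i b$ of the cyclic word, including the wrap-around gap $\rho_M$.
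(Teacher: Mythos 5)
Your proof is correct and follows essentially the same route as the paper's: pass to the Coxeter system $(\overline{W}_B,\overline{B})$ of Gal's Theorem, extract \emph{two} repeats via Corollary \ref{cyclicgirth}, use disjointness of the two intervening $R$-gaps to force one of them below $\gamma(W_R)/2$, and then split on whether that gap is reduced. Note that your conclusion that $u$ is a reduced word on $R(s)$ (rather than the $B(s)$ appearing in the lemma's statement) agrees with the paper's own proof and with the way the lemma is invoked in Theorem \ref{null}, so you have in fact reproduced the intended statement.
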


\begin{proof} If $l_B(w)=0$, then Assertion (2) holds by Corollary \ref{cyclicgirth}. 

From now on, let us assume that $l_B(w)>0$. As $w$ is a cyclic word, we  can write it as

\centerline{$w=u_1s_1u_2s_2\dots u_ks_k$,}

\noindent where $k=l_B(w)$, $s_1, s_2\dots s_k$ are in $B$ and $u_1,\dots, u_k$ are in $\mathcal{W}_R$.
(It is not excluded that some words $u_i$ have length zero.)  As usual, the indices $1,2\dots k$ are viewed as elements of $\Z/k\Z$. 

Set $v_1:=\overline{u_1}$, $v_2:=\overline{u_1u_2}$, $v_3:=\overline{u_1u_2u_3} \dots$. For any index $i\in \Z/k\Z$, set $\sigma_i=s_i^{v_i}$. Since $\mu_R(\overline{w})=1$, it follows that $v_n:=\overline{u_1u_2\dots u_n}=1$. Therefore the identity $\overline{w}=1$ is equivalent to

\centerline{$\sigma_1\sigma_2\dots\sigma_k=1$}

By definition, each $\sigma_i$ belongs to $\overline B$. Moreover, as a word on the alphabet $\overline B$, the word $\sigma_1\sigma_2\dots\sigma_k$ is a loop.

By Lemma \ref{bargirth} we have

\centerline{$\gamma(\overline{W}_B)=\gamma(W_B)$.}

\noindent
Since $k<\gamma(\overline{W}_B)$, it follows from Corollary \ref{cyclicgirth} that there are two indices $i,j\in\Z/k\Z$ such that $\sigma_i=\sigma_{i+1}$ and $\sigma_j=\sigma_{j+1}$. We can choose $i$ and $j$ in such a way that $l(u_{i+1})\leq l(u_{j+1})$. Since we have $l(u_{i+1})+l(u_{j+1})\leq l_R(w)<\gamma(W_R)$, we have

\centerline{$l(u_{i+1})<\gamma(W_R)/2$.}

\noindent Set $s=s_i$, $s'=s_{i+1}$ and $u=u_{i+1}$. The equality $\sigma_i=\sigma_{i+1}$ is equivalent to

\centerline{$s=s'^{\overline u}$.}

\noindent By Assertion (2) of Lemma \ref{parabolic}, we have $s=s'$. Therefore $w$ contains the substring

\centerline{$sus$,}

\noindent where $l(u)<\gamma(W_R)/2$.

To finish the proof, let us consider two cases.

\smallskip\noindent
{\it Case 1:} $u$ is not reduced. By Lemma \ref{girth}, the word $u$ contains the substring $tt$ for some $t\in I$, therefore Assertion (2) holds.

\smallskip\noindent
{\it Case 2:} $u$ is reduced. Since $s=s^{\overline u}$, it follows from the Assertion (1) of Lemma \ref{parabolic} that 
$u$ belongs to $W_R(s)$. Therefore $u$ is a word on the alphabet $R(s)$, and Assertion (1) holds.
\end{proof}

\bigskip\noindent
{\it 2.10 Homotopically trivial paths in $\Cay^+ W$}

\begin{thm}\label{null} Let $S=R\sqcup B$ be a right-angled partition. Let $w$ be a cyclic loop on the alphabet $S$ such that $l_R(w)<\gamma(W_R)$ and $l_B(w)<\gamma(W_B)$.

Then $\vert w\vert$ is null-homotopic in $\Cay^+\, W$.
\end{thm}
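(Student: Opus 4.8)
The plan is to induct on the length $l(w)=l_R(w)+l_B(w)$ of the cyclic loop, using Lemma \ref{shortpath} to locate a reducible substring at each step and Lemma \ref{homotopic} (together with elementary edge backtracking) to realise each reduction by a homotopy inside $\Cay^+\, W$. The base case $l(w)=0$ is the empty loop, whose path is the constant path at $1$ and is therefore null-homotopic.

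For the inductive step I assume $l(w)>0$ and that the statement holds for every shorter cyclic loop obeying the two girth bounds. The hypotheses $l_R(w)<\gamma(W_R)$ and $l_B(w)<\gamma(W_B)$ are precisely those of Lemma \ref{shortpath}, so $w$ contains either a substring $tt$ with $t\in R$, or a substring $sus$ with $s\in B$ and $u$ a reduced word on the alphabet $R(s)$. Before reducing, I first observe that passing from $w=s_1\dots s_n$ to its cyclic rotation $w'=s_2\dots s_ns_1$ does not affect the conclusion: one checks that the edge-sequence of $s_1.|w'|$ is the cyclic rotation $(e_2,\dots,e_n,e_1)$ of the edge-sequence $(e_1,\dots,e_n)$ of $|w|$, so the two loops are conjugate via the edge $(1,s_1)$, and the $W$-equivariance remarks preceding Lemma \ref{homotopic} show that $|w|$ is null-homotopic iff $|w'|$ is. Iterating, I may rotate $w$ so that the substring furnished by Lemma \ref{shortpath} occurs in the interior of the word, writing $w=\alpha\,\xi\,\beta$ with $\xi$ that substring.

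If $\xi=tt$ with $t\in R$, the corresponding subpath of $|w|$ backtracks along the edge labelled $t$ and is homotopic rel endpoints to the constant path already in $\Cay\, W$; deleting $\xi$ gives a cyclic loop $w'=\alpha\beta$ with $l_R(w')=l_R(w)-2$ and $l_B(w')=l_B(w)$, so both bounds persist and $|w|\simeq|w'|$ in $\Cay^+\, W$. If instead $\xi=sus$ with $s\in B$ and $u$ reduced on $R(s)\subseteq S(s)$, then $u\in\mathcal{W}_{S(s)}$ is reduced, so Lemma \ref{homotopic} provides a homotopy $|sus|\simeq|u|$ rel endpoints in $\Cay^+\, W$ through the rectangle of $2$-cells of Figure \ref{Figure1SystoleStory}; replacing $\xi$ by $u$ yields a cyclic loop $w'=\alpha u\beta$ with $l_B(w')=l_B(w)-2$ and $l_R(w')=l_R(w)$, again satisfying both inequalities, and $|w|\simeq|w'|$. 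In either case $l(w')=l(w)-2$, so the inductive hypothesis makes $|w'|$ null-homotopic in $\Cay^+\, W$, and hence so is $|w|$.

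The routine points---that each reduction strictly lowers the length while a reduction inside one block of the partition leaves the letter count of the other block untouched, so that both girth inequalities are automatically preserved---are immediate. The one genuinely delicate issue is the bookkeeping with cyclic words: the homotopies supplied by Lemma \ref{homotopic} and by backtracking are rel-endpoint homotopies of a based path, and one must guarantee that they assemble into a homotopy of the full based loop at $1$. This is exactly why I isolate the cyclic-rotation step at the outset and use $W$-equivariance to move the reducible substring into the interior of the word; once that is done, the remainder is a direct two-case application of the preceding lemmas.
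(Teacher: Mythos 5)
Your proof is correct and follows essentially the same route as the paper's: induct on $l(w)$, use Lemma \ref{shortpath} to find a substring $tt$ or $sus$, remove it by backtracking or by Lemma \ref{homotopic}, and observe that both girth inequalities persist. The only difference is that you spell out the cyclic-rotation/base-point bookkeeping (via conjugation by an edge and $W$-equivariance), which the paper leaves implicit; that added care is sound and does not change the argument.
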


\begin{proof} 
If $w$ contains a substring $tt$, $\vert w\vert$ is homotopic in $\Cay\,W$ to the loop $w'$ obtained by deleting this substring. 

Otherwise, by Lemma \ref{shortpath}, $w$ contains a substring $sus$, where $s\in B$ and  $u$ is a reduced word on the alphabet $R(s)$. Hence by Lemma \ref{homotopic}, the loop $\vert w\vert$ is homotopic to the loop $w'$ obtained by replacing the substring $sus$ by $u$.

In both cases, we have $l(w')=l(w)-2$, while $l_R(w')<\gamma(W_R)$ and $l_B(w')<\gamma(W_B)$. Therefore by induction, $\vert w\vert$ is null-homotopic in $\Cay^+\,W$.
\end{proof}
%%%%%%%%%%%%%%%%%%%%%%%%%%%%%%%%%%%%%%%%%%
%%%%%%%%%%%%%%%%%%%%%%%%%%%%%%%%%%%%%%%%%%%
\section{Uniformization of $2k$-Regular Tesselations}

\noindent Let $k$ be an integer. For simplicity, it will be assume that $k\geq 3$ in the whole section.
For $k=1$ or $2$, the theory is not difficult, but some details are slighty different.

We  define the notion of $2k$-regular standard hexagonal tesselation of a hyperbolic surface $\Sigma$. Except where stated otherwise, all tesselations considered in the paper have hexagonal tiles, so we will skip the term hexagonal in what follows.

We will first show a formal uniformization theorem for these tesselations.
There is a universal surface $\mathcal{H}$, endowed with such a tesselation, on which a certain Coxeter group $W(k)$ acts, such that any  
 closed surface with a $2k$-regular standard  tesselation is isometric to $\mathcal{H}/H$ for some finite index subgroup of $W(k)$. 
 Conversely, a finite index subgroup $H\subset W(k)$ for which the tesselation on $\mathcal{H}/H$ is $2k$-regular can be readily characterised.

 By definition, the {\it curves} of a tesselation are the maximal geodesics containing the sides of the tiles. All curves of a $2k$-regular tesselation have  length $2k\,\arcosh\,2$. This leads to the question - are the set of systoles and the set of  curves of the tesselation of $\mathcal{H}/H$ identical?    
This is partly answered by the main result of Section 3, namely the Criterion \ref{systole}.

The surface $\mathcal{H}$ has two realizations. The first one, denoted by $\mathcal{H}(k)$, is a quotient of the Poincar\'e 
half-plane $\HP$. The second realization is the Coxeter complex $\Cay^+\,W(k)$. 
The proof of Criterion \ref{systole} uses these two realizations of $\mathcal{H}$, and it combines
 standard hyperbolic trigonometry and Theorem \ref{null} of the previous section.

\bigskip
\noindent
{\it 3.1 The decorated hexagons $\mathcal{P}$ and $\overline{\mathcal{P}}$}

\noindent Let $\HP$ be the Poincar\'e half-plane, endowed with its hyperbolic metric. Recall that a hexagon $P\subset \HP$ is {\it regular} if its automorphism group is flag-transitive, i.e. if $\Aut\,P$ acts transitively on the pairs
$({\bf e},v)$, where ${\bf e}$ is a side and $v\in {\bf e}$ is a vertex of $P$. The following lemma follows readily from hyperbolic trigonometry, e.g.  \cite{ratcliffe} pp.90-96.

\begin{lemma}\label{hexagon} Up to isometry, there exists a unique hyperbolic right-angled hexagon $P$ whose side lengths are all equal. Moreover $P$ is regular and the common length of its sides is $L:=\arcosh\, 2$. 
\end{lemma}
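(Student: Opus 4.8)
The plan is to establish both the existence/uniqueness of a right-angled hexagon with all sides equal, and then verify that this hexagon is regular with the stated side length $L = \arcosh 2$. The natural tool is the hyperbolic law of cosines for right-angled hexagons. Recall that for a right-angled hexagon with consecutive sides of lengths $a_1, b_1, a_2, b_2, a_3, b_3$ (all six interior angles being $\pi/2$), the relation between alternate sides takes the form
\begin{equation*}
\cosh a_i = \sinh b_{i-1}\,\sinh b_{i+1}\,\cosh c - \cosh b_{i-1}\,\cosh b_{i+1},
\end{equation*}
or in the cleanest symmetric version, $\cosh c_i = \sinh a\,\sinh b\,\cosh c_j - \cosh a\,\cosh b$ relating opposite sides. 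The precise form I would invoke is the standard right-angled hexagon identity from \cite{ratcliffe}, namely that for consecutive side lengths the cosh of one side equals a fixed expression in the sinh and cosh of the two sides adjacent to its opposite.

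First I would impose the constraint that all six side lengths are equal to a common value $L$, substitute into the right-angled hexagon law of cosines, and obtain a single scalar equation in $\cosh L$. The symmetric identity $\cosh L = \sinh^2 L \,\cosh L - \cosh^2 L$ reduces, using $\sinh^2 L = \cosh^2 L - 1$, to a polynomial equation in $x := \cosh L$. Explicitly this becomes $x = (x^2 - 1)x - x^2$, i.e. $x^3 - x^2 - 2x = 0$, giving $x(x^2 - x - 2) = x(x-2)(x+1) = 0$. Since $x = \cosh L \ge 1 > 0$ and $x \neq -1$, the only admissible root is $x = 2$, yielding $\cosh L = 2$, hence $L = \arcosh 2$. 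This simultaneously proves existence (the equation has a solution) and uniqueness of the side length.

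Next I would address existence of the hexagon itself and its regularity. Existence follows because, given the value $L = \arcosh 2$ satisfying the compatibility relation, the standard construction of right-angled hexagons in $\HP$ (for instance by building it from right-angled pentagons or by the classical realization glueing right-angled Lambert quadrilaterals) produces a genuine hexagon with these side data; the compatibility equation being satisfied is exactly the condition guaranteeing closure. For uniqueness up to isometry, I would invoke the rigidity principle for right-angled hyperbolic hexagons: such a hexagon is determined up to isometry by its alternate side lengths (three parameters), and here all three are equal to $L$, so any two such hexagons are isometric. For regularity, once all sides are equal and all angles are $\pi/2$, the hexagon has a symmetry group acting transitively on flags: the dihedral symmetry of order $12$ (rotations by $\pi/3$ composed with reflections) preserves the hexagon because it permutes equal-length right-angled sides, and this action is flag-transitive by inspection.

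The main obstacle I anticipate is not the algebra, which collapses neatly to $\cosh L = 2$, but rather pinning down precisely which trigonometric identity from \cite{ratcliffe} to cite and ensuring the sign conventions and the assignment of which sides are ``opposite'' are handled correctly, since the right-angled hexagon law of cosines relates a side to the two sides flanking its opposite side, not to its immediate neighbours. A secondary subtlety is justifying regularity rigorously: equal side lengths and right angles make it geometrically evident that the $\pi/3$ rotational symmetry exists, but one should confirm that this rotation is an actual hyperbolic isometry of $\HP$ stabilizing $P$, which follows from the uniqueness clause already established (the rotated hexagon has the same side data, hence coincides with $P$ after an isometry fixing the center). I would therefore structure the proof as: (i) derive $\cosh L = 2$ from the hexagon law of cosines under the equal-sides hypothesis; (ii) conclude existence and uniqueness of the side length and of $P$ up to isometry; (iii) deduce flag-transitivity of $\Aut P$ from uniqueness, establishing regularity.
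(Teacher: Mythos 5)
Your proposal is correct and follows essentially the same route the paper takes, since the paper simply defers this lemma to standard hyperbolic trigonometry (citing \cite{ratcliffe}); your computation via the right-angled hexagon law of cosines, reducing to $x(x-2)(x+1)=0$ with $x=\cosh L$, together with the rigidity of right-angled hexagons in their alternate side lengths, is exactly the standard argument being invoked.
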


\bigskip
By definition, the {\it decorated hexagon} $\mathcal{P}$ is the oriented hexagon $P$, whose  sides,
$S_1,S_2\dots S_6$ are indexed by $\Z/6\Z$ in an anti-clockwise direction. (The orientation of $P$ induces an orientation of its sides.) Let $\overline{\mathcal{P}}$ be the same hexagon with opposite orientation. By definition the {\it red sides} are $S_1, S_3$ and $S_5$, and the other three are called the {\it blue sides}.

\bigskip\noindent
{\it 3.2 The $2k$-regular standard  tesselations}

\noindent
Let $\Sigma$ be an oriented hyperbolic surface, finite or infinite, and let $\tau$  be a tesselation of $\Sigma$
whose tiles are the decorated hexagons $\mathcal{P}$ or $\overline{\mathcal{P}}$. The tesselation $\tau$ is called a {\it standard tesselation} if it satisfies
the following axioms

 \smallskip
(AX1) The tiles are glued along sides of the same index,

\smallskip
(AX2)  Each vertex of the tesselation has valence four.

\smallskip
In this definition, it should be understood that the tiles of a standard tesselation {\it are always  the decorated right-angled regular hexagons} $\mathcal{P}$ and $\overline{\mathcal{P}}$.
The second axiom implies that the sum of the four angles at each vertex is $2\pi$, so it  is equivalent to the fact that $\Sigma$ has no boundary. 

Let $\tau$ be a standard tesselation. By definition, a {\it curve of the tesselation} $\tau$ is a maximal
geodesic in the $1$-skeleton of $\tau$. Since the angle
between two adjacent edges of same index is $\pi$, each curve $C$
is a maximal geodesic of $\Sigma$ 
consisting of a union of adjacent edges of the same index $i$. This common index $i$ is called the  {\it index}
of the curve $C$. 

Given a positive integer $k$, a 
standard  tesselation $\tau$ is called $k$-{\it regular} if it satisfies the following additional requirement

\smallskip
(AX3) Each curve $C$ of the tesselation is a closed geodesic of length $kL$.

\smallskip\noindent
For a standard tesselation $\tau$ of $\Sigma$  satisfying (AX3), each curve $C$ of index $i$ alternately meets  
 curves of index $i+1$ and  curves of index 
$i-1$, so $k$ is an even integer. From now on, we will only speak about $2k$-regular tesselations to emphasise the fact that $2k$ is an even integer. 

Let $\mathcal{T}ess(\Sigma,2k)$ be the set of all $2k$-regular standard tesselations of $\Sigma$.
Once again, it is tacitly assumed that the tiles are the hexagons $\mathcal{P}$ and $\overline{\mathcal{P}}$.

\bigskip\noindent
{\it 3.3 The universal tesselated surface $\mathcal{H}(k)$}

\noindent  
Let $\HP$ be the Poincar\'e half-plane. By Lemma \ref{hexagon}, there is a unique right-angled regular hexagon. Hence, by the Poincar\'e polygon Theorem, there exists a unique (up to isometry) standard  tesselation $\tau_{\infty}$ of $\HP$. Let us choose one tile $\tilde T$ of the tesselation $\tau_{\infty}$ and let $\tilde{S}_i$  be its  side of index $i$. The tile $\tilde T$ will be called 
{\it the distinguished tile} of $\tau_{\infty}$. For $i\in\Z/6\Z$, let $\Delta_i$ be the line containing the side $\tilde{S}_i$
 and let $s_i$ be the reflection across the line $\Delta_i$. The subgroup of $\PGL_2(\R)$ generated 
by the six reflections $(s_i)_{i\in \Z/6\Z}$ is the right-angled Coxeter group $W(\infty)$ with presentation

$$\langle (s_i)_{i\in \Z/6\Z} \mid s_i^2=1 \,\mathrm{and}\, (s_is_{i+1})^2=1,
\,\forall i\in\Z/6\Z \rangle.$$

Let $k\geq 2$ and let $N(k)$ be the normal subgroup of $W(\infty)$ generated by the elements
$(s_is_{i+2})^k$, for $i\in\Z/6\Z$, and all their conjugates. The group $W(k):=W(\infty)/N(k)$ is the Coxeter group with presentation

$$\langle (s_i)_{i\in \Z/6\Z} \mid
s_i^2=1, \, (s_is_{i+1})^2=1,
\,\mathrm{and}\,(s_is_{i+2})^k=1,
\,\forall i\in\Z/6\Z 
\rangle$$

Set $W(\infty)^+:=W(\infty)\cap PSL_2(\R)=\{w\in W(\infty)\mid \epsilon(w)=1\}$.
It follows from the Poincar\'e Theorem that $W(\infty)$ acts freely and transitively on the set of tiles of $\tau_{\infty}$. So a nontrivial element $w\in W(\infty)^+$ is elliptic iff it is conjugate to $s_{i}s_{i+1}$ for some $i\in \Z/6\Z$. Since $k\geq 2$, the subgroup  $N(k)$ acts freely on $\HP$.

Set $\mathcal{H}(k)=\HP/N(k)$ and let $\tau_{k}$ be the standard tesselation of $\mathcal{H}(k)$ induced by $\tau_{\infty}$. Note that $s_{i-1}s_{i+1}(\Delta_i)=\Delta_i$ and its restriction to  the line $\Delta_i$ is a translation of length $2L$. It follows that $\tau_{k}$ is a $2k$-regular tesselation. The {\it distinguished tile} of $\mathcal{H}(k)$, denoted by $T$, is the image of $\tilde T$ in $\mathcal{H}(k)$.

Set $W(k)^+=\{w\in W(k)\mid \epsilon(w)=1\}$ and set $t_i=s_{i-1}s_{i+1}$ for any $i\in\Z/6\Z$. For any element $t\in W(k)$, let $t^{W(k)}$ be its conjugacy class. When a subgroup $H\subset W(k)^+$ acts freely on $\mathcal{H}(k)$,
let $\tau_{H}$ be the tesselation induced by $\tau_{k}$ on the surface $\mathcal{H}(k)/H$. Note that the condition $H\subset W(k)^+$ ensures that $\mathcal{H}(k)/H$ is orientable.

\begin{lemma}\label{condition}
A subgroup $H\subset W(k)^+$ acts freely on
$\mathcal{H}(k)$ iff 

(\ref{condition}.1) \hskip2cm{$H\cap (s_is_{i+1})^{W(k)}=\emptyset$ for all $i\in\Z/6\Z$.}

Moreover assume that $H$ satisfies the condition (\ref{condition}.1). Then the tesselation $\tau_{H}$ is $2k$-regular iff the following condition holds

(\ref{condition}.2) \hskip16mm{$H\cap(t_i^l)^{W(k)}=\emptyset$ for all $i\in\Z/6\Z$ and $1\leq l<k$.}
\end{lemma}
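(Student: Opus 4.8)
The plan is to transfer both assertions to the universal cover $\HP$, where the geometry of $W(\infty)$ is fully understood, exploiting two facts stated above: $N(k)$ acts freely on $\HP$, and $N(k)\subset W(\infty)^+$ (each generator $(s_is_{i+2})^k$ has even length, hence lies in $\Ker\,\epsilon$). The basic dictionary I would record first is this: since $\mathcal{H}(k)=\HP/N(k)$ with $N(k)$ acting freely, an element $h\in W(k)$ fixes a point of $\mathcal{H}(k)$ if and only if some lift of $h$ in $W(\infty)$ fixes a point of $\HP$ (as $n$ ranges over $N(k)$, the elements $n\tilde h$ run through all lifts of $h$). Moreover, because $N(k)\subset W(\infty)^+$, every lift of an $h\in W(k)^+$ lies in $W(\infty)^+$.

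\emph{First part.} Let $1\neq h\in W(k)^+$. By the dictionary, $h$ has a fixed point on $\mathcal{H}(k)$ iff some lift $\tilde h\in W(\infty)^+$ has a fixed point on $\HP$. A nontrivial orientation-preserving isometry with a fixed point is elliptic, and by the classification recalled above a nontrivial elliptic element of $W(\infty)^+$ is $W(\infty)$-conjugate to some $s_is_{i+1}$. I would then check that \emph{``some lift of $h$ is $W(\infty)$-conjugate to $s_is_{i+1}$''} is equivalent to \emph{``$h$ is $W(k)$-conjugate to $s_is_{i+1}$''}: projecting a conjugacy $\tilde h=\tilde g\,(s_is_{i+1})\,\tilde g^{-1}$ gives one direction, and lifting a $W(k)$-conjugator $g$ produces a conjugate lift of $h$ for the other. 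Since $s_is_{i+1}$ has order $2$ (so is $\neq 1$) in $W(k)$, this shows $h$ has a fixed point iff $h\in\bigcup_i (s_is_{i+1})^{W(k)}$. Hence $H$ acts freely iff $H$ avoids all these classes, which is exactly (\ref{condition}.1).

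\emph{Second part.} Assume (\ref{condition}.1), so $H$ acts freely and $\tau_H$ is an honest standard tesselation of $\mathcal{H}(k)/H$; axioms (AX1)--(AX2) descend automatically, so $2k$-regularity is precisely the requirement that every curve have length $2kL$. The index-$i$ curves of $\tau_k$ form a single $W(k)$-orbit $W(k)\cdot C_i$, where $C_i$ is the image of $\Delta_i$ (this follows from transitivity of $W(k)$ on the tiles), and each has length $2kL$ on $\mathcal{H}(k)$. A curve of $\tau_H$ of index $i$ is the image of some $wC_i$ under the free covering $\mathcal{H}(k)\to\mathcal{H}(k)/H$, hence a closed geodesic of length $2kL/|H_w|$, where $H_w:=\{h\in H: h\,wC_i=wC_i\}$. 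Thus $\tau_H$ is $2k$-regular iff $H_w$ is trivial for every $i$ and every $w$. To compute $H_w$ I would determine $\mathrm{Stab}_{W(k)}(C_i)\cap W(k)^+$: lifting to $\HP$, the orientation-preserving symmetries preserving $\Delta_i$ form an infinite dihedral group whose direction-preserving part is $\langle t_i\rangle$ (the perpendicular tesselation lines meet $\Delta_i$ at spacing $L$, so the translation $t_i$ by $2L$ is the shortest one), the remaining elements being half-turns centred on $\Delta_i$; descending, $\mathrm{Stab}_{W(k)}(C_i)\cap W(k)^+$ is dihedral of order $2k$ with cyclic part $\langle t_i\rangle\cong\Z/k$ (recall $t_i=s_{i-1}s_{i+1}$ has order $k$ and $t_i^k\in N(k)$). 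Freeness of $H$ is decisive here: a direction-reversing element of the stabilizer is a half-turn and has a fixed point, so cannot lie in $H$; therefore $H_w\subseteq w\langle t_i\rangle w^{-1}$, a finite cyclic group. Consequently $H_w\neq\{1\}$ for some $w$ iff $w\,t_i^l\,w^{-1}\in H$ for some $1\le l<k$, i.e. iff $H\cap (t_i^l)^{W(k)}\neq\emptyset$. Combining over all $i$ gives that $\tau_H$ is $2k$-regular iff $H\cap (t_i^l)^{W(k)}=\emptyset$ for all $i$ and all $1\le l<k$, which is (\ref{condition}.2).

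The main obstacle I anticipate lies in the stabilizer computation of the second part: one must correctly pin down the direction-preserving part of the curve stabilizer as \emph{exactly} $\langle t_i\rangle$ (no orientation-preserving single-edge translation exists) and then invoke freeness to discard the direction-reversing half-turns, since these are precisely the elements that could shorten a curve without producing a fixed point. By contrast, the first part is comparatively routine once the lifting dictionary and the classification of elliptic elements are in hand.
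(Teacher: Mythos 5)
Your proposal is correct and follows essentially the same route as the paper: part (1) reduces freeness to the classification of fixed-point (elliptic) elements of $W(k)^+$ via the Poincar\'e fundamental domain, and part (2) identifies the setwise stabiliser of a curve as $\langle t_i\rangle$ together with half-turns, which freeness excludes. The only cosmetic difference is that you lift to $\HP$ and conjugate there, whereas the paper argues directly in $\mathcal{H}(k)$ using tile-transitivity; your write-up is if anything slightly more complete, since it records both directions of the second equivalence and makes explicit why no translation by $L$ along $\Delta_i$ can lie in $W(k)$.
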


\begin{proof} Let $w\in H$. Since $W(k)$ is tile-transitive, $w$ has a fixed point in $\mathcal{H}(k)$ iff $w^v$ has a fixed point in $T$, for some $v\in W(k)$. By hypothesis, $w^v$ is not a reflection therefore $w^v=s_is_{i+1}$ for some $i$ in $\Z/6\Z$, which proves the first assertion.

Assume that $\tau_{H}$ is not $2k$-regular. 
By assumption, there is a curve $C$ of the tesselation $\tau_k$ whose image in $\mathcal{H}(k)/H$ has length less than $kL$.
Since $W(k)$ is tile-transitive, we can assume that
$C$ contains  the side $S_i$ of the distinguished tile $T$, for some $i\in\Z/6\Z$. We have 
$h(C)=C$, for some nontrivial $h\in H$. Since
it has no fixed points, 
$h\vert_{C}$ is a rotation. It follows that
 $h=t_i^l$ for some $\in\Z/6\Z$, which proves the second assertion.
\end{proof}

Set

\centerline{$\mathcal{T}ess(*,2k)=\cup_{\Sigma}\, \mathcal{T}ess(\Sigma,2k)$,}

\noindent where $\Sigma$ varies over all oriented closed hyperbolic surfaces of genus $g\geq 2$.

In what follows, we will only use the previous Lemma \ref{condition}. However we would like to briefly explaine that $\mathcal{H}(k)$ is the universal cover of all $2k$-regular standard tesselations, as will now be shown.

\begin{thm}\label{universal} 
 The map $H\mapsto \tau_{H}$ is a one-to-one correspondence between 
 \begin{enumerate}
\item the finite index subgroups $H\subset W(k)^+$ 
satisfying (\ref{condition}.1) and
(\ref{condition}.2), and 

\item the $2k$-regular standard tesselations $\tau$ of closed oriented surfaces. 
\end{enumerate}
\end{thm}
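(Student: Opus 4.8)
The plan is to construct an explicit inverse to the map $H\mapsto\tau_H$ and verify that the two assignments are mutually inverse. The forward direction is essentially already in hand: if $H\subset W(k)^+$ has finite index and satisfies (\ref{condition}.1) and (\ref{condition}.2), then Lemma \ref{condition} shows that $H$ acts freely on $\mathcal{H}(k)$ and that $\tau_H$ is $2k$-regular, the finiteness of $[W(k):H]$ forces $\mathcal{H}(k)/H$ to have finitely many tiles and hence to be closed, and $H\subset W(k)^+$ makes the quotient oriented. So $H\mapsto\tau_H$ lands among $2k$-regular standard tesselations of closed oriented surfaces, and the real content is to produce and identify the inverse.

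To build the inverse, start from a $2k$-regular standard tesselation $\tau$ of a closed oriented hyperbolic surface $\Sigma$, together with a chosen distinguished decorated tile. Since $\Sigma$ has genus $\geq 2$, its universal cover is $\HP$, and $\tau$ lifts to a tesselation $\tilde\tau$ of $\HP$; because the axioms (AX1)--(AX2) and Lemma \ref{hexagon} are local, $\tilde\tau$ is again a standard tesselation of $\HP$ (though no longer $2k$-regular, as its curves are now infinite lines). By the uniqueness of the standard tesselation of $\HP$ noted for $\tau_\infty$, we may identify $(\HP,\tilde\tau)$ with $(\HP,\tau_\infty)$, pinning the identification down by sending the chosen lift of the distinguished tile of $\tau$ to the distinguished tile $\tilde T$. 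Under this identification the deck group $\Gamma=\pi_1(\Sigma)$ consists of decoration-preserving isometries of $\tau_\infty$, i.e.\ elements of $W(\infty)$; since $\Sigma$ is oriented we get $\Gamma\subset W(\infty)^+$, and since $\Gamma$ acts freely it meets no conjugate of any $s_is_{i+1}$.

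The crucial step is that $2k$-regularity forces $N(k)\subset\Gamma$. A curve of $\tau_\infty$ of index $i$ is a line $w(\Delta_i)$ for some $w\in W(\infty)$, and $w\,t_i\,w^{-1}$ (recall $t_i=s_{i-1}s_{i+1}$ translates $\Delta_i$ by $2L$) acts along it as translation by $2L$. The image of this curve in $\Sigma$ is, by (AX3), a closed geodesic of length $2kL$, so the deck transformation that closes it up is exactly $w\,t_i^k\,w^{-1}$, which therefore lies in $\Gamma$. As $i$ and $w$ vary these elements generate the normal closure $N(k)$, whence $N(k)\subset\Gamma$. Consequently $H:=\Gamma/N(k)$ is a subgroup of $W(\infty)^+/N(k)=W(k)^+$; it has finite index because $[W(\infty):\Gamma]$ equals the (finite) number of tiles of the compact $\Sigma$; and $\Sigma=\HP/\Gamma=\mathcal{H}(k)/H$ carries precisely $\tau=\tau_H$. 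Lemma \ref{condition} then returns (\ref{condition}.1) from freeness and (\ref{condition}.2) from the $2k$-regularity of $\tau_H$. This defines the candidate inverse $\tau\mapsto H$.

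Finally one checks the two maps are mutually inverse: applying the lifting construction to $\tau_H$ recovers $\Gamma$ as the preimage of $H$ in $W(\infty)$, hence returns $\Gamma/N(k)=H$, while $\tau_{H_\tau}$ is by construction the original $\tau$. The step I expect to require the most care is the rigidity of the identification $(\HP,\tilde\tau)\cong(\HP,\tau_\infty)$: one must verify that preserving the $\Z/6\Z$-decoration forces every tesselation-preserving self-isometry of $\tau_\infty$ to lie in $W(\infty)$ (so that $\Gamma$ genuinely embeds in $W(\infty)$), and that fixing the distinguished decorated tile eliminates the residual conjugation ambiguity, so that $\tau\mapsto H$ is a well-defined map to subgroups rather than to conjugacy classes. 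Everything else is covering-space theory together with Lemmas \ref{hexagon} and \ref{condition}.
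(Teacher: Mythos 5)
Your proof is correct and follows essentially the same route as the paper: the forward direction via Lemma \ref{condition}, and the inverse by lifting $\tau$ to the universal cover $\HP$, identifying the lifted tesselation with $\tau_\infty$ to embed $\pi_1(\Sigma)$ in $W(\infty)$, and deducing $N(k)\subset\pi_1(\Sigma)$ from $2k$-regularity so that $\tau=\tau_H$ with $H=\pi_1(\Sigma)/N(k)$. You supply details the paper leaves implicit (the explicit verification that the elements $w\,t_i^{k}\,w^{-1}$ closing up the curves generate $N(k)$, and the rigidity of the identification with $\tau_\infty$); the only difference in bookkeeping is that the paper avoids your base-tile normalization by stating the bijection at the level of conjugacy classes of subgroups and isometry classes of tesselations.
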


In the previous statement,  it should be understood that the word ``subgroups" refers to conjugacy classes of subgroups and ``tesselations" refers to isometry classes of tesselations. 

\begin{proof} The Lemma \ref{condition} shows that this map is well-defined. 

We will now define the inverse map. Let $\tau\in \mathcal{T}ess(*,2k)$. By definition, $\tau$ is a  tesselation of some oriented closed surface $\Sigma$.  The tesselation $\tau$ induces a standard tesselation of its universal cover $\HP$.
Since the induced tesselation is isometric to $\tau_\infty$,  there is an  embedding 
$\pi_1(\Sigma, p)\subset W(\infty)$, where
$p\in\Sigma$ is a base point. Since $\tau$ is $2k$-regular, it follows that $\pi_1(\Sigma, p)\supset N(k)$, and therefore
$\tau=\tau_{_H}$, where $H=\pi_1(\Sigma, p)/N(k)$.
\end{proof}

\noindent {it Remark.} The universal tesselated surfaces $\mathcal{H}(k)$
can be defined for $k=1$ and $2$. In fact
$\mathcal{H}(1)$ is the genus $2$ surface of 
\cite{SS}, which was the starting point of our paper.

\bigskip\noindent
{\it 3.4 The homeomorphism $Cay^+ W(k)\simeq \mathcal{H}(k)$}

\noindent 
Since $W(k)$ acts freely and transitively on the set of tiles, the Cayley graph $\Cay\, W(k)$ can be identified with the dual graph of the tesselation $\tau_{k}$. 

Let us recall more precisely the definition of the {\it dual graph} $\tau_{k}^*$ of $\tau_{k}$. Let $T$ be the distinguished tile of $\mathcal{H}(k)$ and $X$ be its center. For $w\in W(k)$, set $T(w)=w.T$, $X(w)=w.X$
and, for any $i\in\Z/6\Z$, let $S_i(w)$ be the side of $T(w)$ of index $i$.

By definition,

\centerline 
{$V(\tau_{k}^*):=\{X(w)\mid w\in W(k)\}$}

\noindent is the {\it set of vertices} of $\tau_{k}^*$. 
For $w\in W(k)$ and $i\in\Z/6\Z$, 

\centerline{${\bf O}:=T(w)\cup T(ws_i)$}

\noindent is a right-angled octogon with two sides of length $2L$ and all others of length $L$. Let ${\bf e}(w,ws_i)$ be the geodesic arc joining $X(w)$ and $X(ws_i)$ in ${\bf O}$. In this instance, the arcs are not oriented, so ${\bf e}(w,ws_i)={\bf e}(ws_i,w)$. By definition

\centerline{$E(\tau^*_{k}):=\{{\bf e}(w,ws_i)\mid w\in W(k)\,
\mathrm{and} \,i\in\Z/6\Z\}$}

\noindent is the {\it set of edges} of 
$\tau_{k}^*$.
The duality property means that each tile 
$T(w)$ contains exactly one vertex of $\tau_{k}^*$, namely $X(w)$, and each of its sides $S_i(w)$ meets
exactly one edge of $\tau_{k}^*$, namely 
${\bf e}(w,ws_i)$. The natural homeomorphisms between
an edge $\vert (w,ws_i)\vert$
of $\Cay\, W(k)$ and an edge 
${\bf e}(w,ws_i)$ of $\tau_{k}^*$ provide a natural homeomorphism

\centerline{$\Cay\, W(k)\simeq \tau_{k}^*$.}

It follows that the topological graph $\Cay\, W(k)$ is embedded in $\Cay^+ W(k)$ and in 
$\mathcal{H}(k)$.
In fact, the  
two embeddings of $\Cay\, W(k)$ are the same, as it is shown in the next

\begin{lemma}\label{homeo}
The embedding $\Cay\,W(k)\subset \mathcal{H}(k)$ extends to a  homeomorphism

\centerline{$\Cay^+ W(k)\simeq \mathcal{H}(k)$.}
\end{lemma}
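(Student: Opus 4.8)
The plan is to build the homeomorphism by matching the $2$-cells of $\Cay^+\,W(k)$ with the vertices of the tesselation $\tau_k$, and then extending the already-established identification of the $1$-skeleton $\Cay\,W(k)\simeq\tau_k^*$ across each $2$-cell. First I would record the relevant combinatorics of commuting pairs: in $W(k)$ the only pairs $I=\{s,t\}\in\mathfrak{P}$ of commuting generators are $I=\{s_i,s_{i+1}\}$, $i\in\Z/6\Z$, since $m_{s_i,s_{i+1}}=2$ whereas $m_{s_i,s_{i+2}}=k\geq 3$. For such an $I$ the parabolic subgroup $W_I=\langle s_i,s_{i+1}\rangle$ is a Klein four-group, so each coset $vW_I=\{v,vs_i,vs_{i+1},vs_is_{i+1}\}$ has four elements, and the square ${\bf c}(v,I)$ is attached along the corresponding $4$-cycle of $\Cay\,W(k)$.

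Next I would set up the geometric counterpart. Subdivide each hexagonal tile $T(w)$ into six quadrilaterals, one per vertex of $T(w)$: the quadrilateral attached to a vertex $P$ of $T(w)$ has as corners the centre $X(w)$, the midpoints of the two sides of $T(w)$ meeting at $P$, and $P$ itself. Grouping these quadrilaterals according to the vertex $P$ of $\tau_k$ they surround, and invoking axiom (AX2) that every vertex of $\tau_k$ has valence four, the four quadrilaterals sharing $P$ assemble into a region $\St(P)$ which is a topological disk. As $P$ ranges over the vertices of $\tau_k$, these stars have disjoint interiors and cover $\mathcal{H}(k)$, since every point of every hexagon lies in exactly one quadrilateral.

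The heart of the argument is the bijection between the $2$-cells of $\Cay^+\,W(k)$ and the vertices of $\tau_k$. Given $v$ and $I=\{s_i,s_{i+1}\}$, the four tiles $T(w)$ for $w\in vW_I$ are exactly the four hexagons meeting at the vertex $P$ of $T(v)$ lying between the sides $S_i(v)$ and $S_{i+1}(v)$: crossing the side of index $i$ is recorded by right multiplication by $s_i$, and combined with (AX1), the valence-four condition, and the fact that $W(k)$ acts freely and transitively on tiles, this shows $T(vs_i)$, $T(vs_{i+1})$, $T(vs_is_{i+1})$ are the three neighbours of $T(v)$ around $P$. Conversely every vertex $P$ determines such a coset and such a type, and I would check that the descriptions of $P$ coming from $T(v)$ and from $T(vs_i)$, $T(vs_{i+1})$ agree, so that $vW_I\mapsto P$ is well defined and bijective. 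Under the homeomorphism $\Cay\,W(k)\simeq\tau_k^*$, the boundary $4$-cycle $\partial{\bf c}(v,I)$ through the centres $X(v),X(vs_i),X(vs_is_{i+1}),X(vs_{i+1})$ maps precisely onto $\partial\,\St(P)$, the concatenation of the four dual edges through the midpoints of the sides incident to $P$.

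Finally, since both ${\bf c}(v,I)$ and $\St(P)$ are closed disks whose boundaries are already identified by the $1$-skeleton homeomorphism, I would extend this boundary identification to a homeomorphism ${\bf c}(v,I)\to\St(P)$ by coning to an interior point. Distinct $2$-cells meet only along edges of $\Cay\,W(k)$, where the cell-wise maps already coincide with the fixed $1$-skeleton homeomorphism, so they glue to a continuous bijection $\Cay^+\,W(k)\to\mathcal{H}(k)$; as it restricts to a homeomorphism on a neighbourhood of every point (interior of a cell, across an edge shared by two cells, and around a centre $X(w)$ via the six cells meeting there), it is a homeomorphism. To keep the checking short I would exploit $W(k)$-equivariance: the group permutes the squares and the stars compatibly, so it suffices to carry out the boundary-matching for the cells incident to the distinguished tile $T$ and propagate by the action. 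The hard part will be exactly this matching step, namely confirming that the purely group-theoretic gluing data of the squares ${\bf c}(v,I)$ reproduces the geometric adjacency of the four hexagons around each vertex, where (AX1), (AX2) and the freeness of the $W(k)$-action must be combined carefully.
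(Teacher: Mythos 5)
Your proposal is correct and follows essentially the same route as the paper: the paper also subdivides each hexagon into six quadrilaterals $\Omega_i(v)$ with corners $X(v)$, two side-midpoints and a tile-vertex, assembles the four pieces around each vertex of $\tau_k$ into a quadrilateral ${\bf Q}(v,i)$ (your $\St(P)$) with corners the four centres $X(vW_{I(i)})$, and matches $\partial{\bf Q}(v,i)=\partial{\bf c}(v,I_i)$ under the identification $\Cay\,W(k)\simeq\tau_k^*$ before extending over the disks. Your write-up is somewhat more explicit about verifying that the group-theoretic gluing reproduces the geometric adjacency of the four tiles around a vertex, but the decomposition and the key matching step are the same.
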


\begin{proof} By definition, $\Cay^+ W(k)$ is tesselated by  quadrilaterals and  
$\mathcal{H}(k)$ is tesselated by hexagons.
Roughly speaking, it will be shown that these two tesselations are dual to each other, see Figure 
\ref{Figure2SystoleStory}.

Since $k\geq 3$, the rank-two commutative parabolic subgroups of $W(k)$ are the subgroups
$W_{I(i)}$, where
$I(i)=\{s_i, s_{i+1}\}$ and $i\in \Z/6\Z$. 
For $v\in W(k)$, it is clear that

\centerline{
${\bf H}=T(v)\cup T(vs_i)
\cup T(vs_{i+1})\cup T(vs_i s_{i+1})
=W(k)_{I(i)}.T(v)$}

\begin{figure}
\centering
\includegraphics[width=0.4\textwidth]{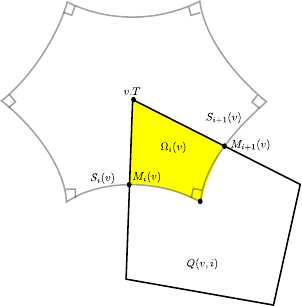}
\caption{The quadrilateral from the proof of Lemma \ref{homeo}.}
\label{Figure2SystoleStory}
\end{figure}

\noindent is a right-angled $12$-gon with four sides of length $2L$ and all others of length $L$. 
Let ${\bf Q}(v, i)$ be the quadrilateral contained in ${\bf H}$ whose set of vertices is
$$W(k)_{I(i)}.X(v)= \{X(v),X(vs_i), X(vs_{i+1}), X(vs_i s_{i+1})\}$$ as shown in Figure \ref{Figure2SystoleStory}. Consequently, we have ${\bf Q}(v, i)={\bf Q}(v', i)$ if $v= v'\mod W(k)_{I(i)}$.

Set $A_i(v)=S_i(v)\cap S_{i+1}(v)$ and let $M_i(v)$ be the midpoint $S_i(v)$ and $\Omega_i(v)\subset T(v)$ be the convex
 quadrilateral with vertices given by $M_i(v)$, $A_i(v)$, $M_{i+1}(v)$ and $X(v)$.
The tile $T(v)$ is tesselated by the six quadrilaterals  $\Omega_i(v)$. Since
 
 \centerline{${\bf Q}(v, i)\cap T(v)=\Omega_i(v)$}
 
\noindent the set of quadrilaterals

\centerline{$\{{\bf Q}(v, i)\mid  i\in\Z/6\Z
\,\mathrm{and}\,v\in W(k)\}$}

\noindent
tesselates $\mathcal{H}(k)$. 

Since $\Cay\, W(k)$ has been identified with the graph $\tau^*_{k}\subset \mathcal{H}(k)$, there is an equality

\centerline{ $\partial {\bf Q}(v, i)=
\partial {\bf c}(v, I_i)$.}

\noindent 

\noindent which can be extended to a homeomorphism 

\centerline{${\bf Q}(v, i)\simeq 
 {\bf c}(v, I_i)$.}
Therefore $\Cay^+ W(k)$ is homeomorphic to
 
\centerline {$\cup_{i,v} {\bf Q}(v, i)$}

\noindent where $i$ varies over $\Z/6\Z$ and $v$ over $W(k)$. It follows that $\Cay^+ W(k)$ is homeomorphic to $\mathcal{H}(k)$.
\end{proof}

\bigskip\noindent
{\it 3.5 The combinatorial datum $\omega(\gamma)$ associated to  arcs in $\mathcal{H}(k)$}

\noindent We will now start to investigate the
length of the closed geodesics of the surfaces 
$\mathcal{H}(k)/H$. To do so, we will first
look at the lengths of the arcs $\gamma$ in 
$\mathcal{H}(k)$. In this section, we will associate  a word $\omega(\gamma)$ over the letters $(s_i)_{i\in\Z/6\Z}$. Its length will be called
the combinatorial length of an arc.
Then, we will find a lower bound
for the combinatorial length of the closed geodesic. The relation between the hyperbolic lengths and the  hyperbolic lengths will be examined in the next subsection.
\\

We will now provide the precise definition
of the combinatorial datum $\omega(\gamma)$ associated to some geodesic paths $\gamma$.
  Let 
$\Arc_T(\mathcal{H}(k))$ be the set of geodesic arcs $\gamma:[0,l]\to \mathcal{H}(k)$ of length $l$ such that 

\begin{enumerate}
\item $\gamma(t)$ lies in $T^0$ for 
small $t\neq 0$

\item $\gamma(0)\in C$  and $\gamma(l)\in C'$ for some curves $C, C'$ of $\tau_k$,
\end{enumerate}

\smallskip
\noindent where $T^0$ is the interior of the distinguished cell $T$ of $\tau_{k}$.
Note that the first condition implies that $\gamma$ is not an arc of a curve of 
$\tau_{k}$. Of course, the previous conditions do not ensure that $\gamma$ necessarily lifts a closed geodesic
in some $\mathcal{H}(k)/H$. It is the case
only if
the indices of $C$ and $C'$ are equal together with some position and angle conditions
for $\gamma(0)$ and $\gamma(l)$. In the notation
$\Arc_T(\mathcal{H}(k))$ the index $T$ emphasizes 
that $\gamma$ originates in $T$. 
\\

Now we are going to define a word

\centerline{
$\omega(\gamma)=s_{i_1} s_{i_2}\dots s_{i_N}$,}

\noindent associated with the path $\gamma$.
First assume that $\gamma$ does not meet any vertex
of the tesselation. Then $i_1\dots, i_{N-1}$ are the indices of the curves successively crossed by 
$\gamma$ and $i_N$ is the index of the the curve $C'$ passing through $\gamma(l)$. Note that 
$\omega(\gamma)$ does not encode the index of the initial curve $C$.

When $\gamma$ does meet some vertices $v$,
we need a convention to precise the order of the curves that  $\gamma$ meets. When $\gamma$ crosses a vertex   at the intersection of two curves  of indices $i$ and $i+1$, we consider that $\gamma$ first crosses the curve of index $i$ and then the curve of index $i+1$. Similarly, if $\gamma(l)$ terminates at such point,  we consider that $\gamma$ first crosses the curve of index $i$ and then terminates on a curve of index $i+1$. 
As before, the datum $\omega(\gamma)$ does not encode any information about  $\gamma(0)$.

  The integer $N=l(\omega(\gamma))$ is called the {\it combinatorial length} of $\gamma$. 

\begin{lemma}\label{homotopy}
Let $\gamma\in \Arc_T(\mathcal{H}(k))$ be a closed geodesic. Then $\gamma$ is freely homotopic to the loop $\vert \omega(\gamma)\vert$ in $\Cay\,W(k)$.
\end{lemma}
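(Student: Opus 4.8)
We have a closed geodesic $\gamma$ on $\mathcal{H}(k)$ that belongs to $\Arc_T(\mathcal{H}(k))$, and we've associated a cyclic word $\omega(\gamma) = s_{i_1}\dots s_{i_N}$ to it by recording the indices of the curves it crosses in order. The claim is that $\gamma$ is freely homotopic to the loop $|\omega(\gamma)|$ in the Cayley graph $\Cay\, W(k)$.

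The key geometric fact is the homeomorphism $\Cay^+ W(k) \simeq \mathcal{H}(k)$ from Lemma \ref{homeo}, which realizes the Cayley graph as the dual graph $\tau_k^*$ of the tesselation. So I should think of $\gamma$ as a curve in $\mathcal{H}(k)$, and $|\omega(\gamma)|$ as a loop in the dual graph sitting inside the same space.

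**The structure of the proof.** The word $\omega(\gamma)$ records curve crossings; a curve of index $i$ is dual to an edge $(w, ws_i)$ in $\Cay\, W(k)$, since the edge ${\bf e}(w,ws_i)$ crosses the side $S_i(w)$. So when $\gamma$ passes from tile $T(w)$ into the adjacent tile $T(ws_i)$ by crossing the curve of index $i$, this corresponds exactly to traversing the dual edge labeled $s_i$. Starting from the distinguished tile $T$ and following $\gamma$ around, the successive tile-crossings spell out precisely the word $\omega(\gamma)$, and since $\gamma$ closes up — returning to its starting tile — the word is a loop.

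**The plan.** I would first lift to the setting where $\gamma$ avoids all vertices of the tesselation, noting the convention for vertices is chosen exactly so that a small perturbation of $\gamma$ off any vertex gives the same word. Then I would track the sequence of tiles $T(w_0), T(w_1), \dots, T(w_N) = T(w_0)$ visited by $\gamma$, where consecutive tiles differ by a generator: $w_{j} = w_{j-1} s_{i_j}$. This gives $w_N = w_0\, s_{i_1}\cdots s_{i_N}$, and since $\gamma$ returns to its starting tile, $\overline{\omega(\gamma)} = 1$, confirming $|\omega(\gamma)|$ is a loop. The homotopy itself comes from a standard dual-cell retraction: within each tile $T(w_j)$ the arc of $\gamma$ can be pushed to the center $X(w_j)$, and within each octagon $T(w_{j-1}) \cup T(w_j)$ the crossing arc of $\gamma$ is homotopic (rel the two curves it meets) to the dual edge ${\bf e}(w_{j-1}, w_j)$. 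Concatenating these gives a free homotopy from $\gamma$ to the edge-path of $|\omega(\gamma)|$, carried out entirely inside the tiles and octagons of $\mathcal{H}(k)$.

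**The main obstacle.** The delicate point is the vertex convention and making sure the homotopy is well-defined and free (not based). The cleanest route is to observe that both $\gamma$ and $|\omega(\gamma)|$ meet each tile $T(w_j)$ in its interior and cross exactly the same sequence of sides (curves), so a straight-line homotopy within the nerve/dual structure, using the tile-by-tile convexity of $\mathcal{H}(k)$, deforms one into the other. The vertex-crossing convention must be checked to be consistent with this: passing a vertex of index-$i$/index-$(i+1)$ curves corresponds to the dual path $s_i s_{i+1}$ crossing a single square cell ${\bf Q}(w,i)$, and choosing the order "$i$ then $i+1$" is precisely what makes $\gamma$ homotopic to the two-edge path through that square rather than its diagonal. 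Once this consistency is verified, the induction over tiles gives the free homotopy in $\Cay\, W(k)$, completing the proof.
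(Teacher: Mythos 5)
Your proposal is correct and follows essentially the same route as the paper: both arguments track the sequence of tiles $T(v_0),T(v_1),\dots,T(v_N)=T(v_0)$ crossed by $\gamma$, insert arcs to the tile centers $X(v_n)$ at each crossing point, and use the convexity of each octagon $T(v_{n-1})\cup T(v_n)$ to deform the resulting segments onto the dual edges ${\bf e}(v_{n-1},v_n)$, yielding a free homotopy from $\gamma$ to $\vert\omega(\gamma)\vert$. Your explicit attention to the vertex-crossing convention is a welcome addition that the paper leaves implicit, but it does not change the substance of the argument.
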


\begin{proof} Set 
$\omega(\gamma)=s_{i_1} s_{i_2}\dots s_{i_N}$,
where $N$ is the combinatorial length of $\gamma$.
Let $0<t_1\leq\dots\leq t_n\dots \leq t_{N-1}\leq l$ be the 
successive time at which $\gamma(t_n)$ crosses
a curve of index $i_n$. Also set
$t_0=0$ and $t_N=l$.
For any $n$ with $1\leq n\leq N$, let $\gamma_n$ be the restriction of $\gamma$ to $[t_{n-1}, t_n]$, so we have

\centerline{$\gamma=\gamma_1*\gamma_2*\dots*\gamma_N$,}

\noindent where the $*$ denotes the concatenation
of paths.

Recall that, for $v\in W(k)$, $T(v)$ denotes the tile $v.T$ and $X(v)$ is its center.
Let $v_0, v_1\dots v_N$ be the elements of $W(k)$ defined by
$v_0=1$ and $v_n=\overline{s_{i_1} s_{i_2}\dots s_{i_n}}$ for $n\geq 1$. 
By definition,  
$\gamma(t_n)$ belongs to the side
$T(v_n)\cap T(v_{n-1})$ for any $0\leq n\leq N$. Let  $\delta_n\subset T(v_n)$ be the oriented geodesic arc from $\gamma(t_n)$ to $X(v_n)$, and let $\overline {\delta}_n$ be the same arc with the opposite orientation.

Since $\gamma$ is a loop, we have $v_N=1$ and therefore ${\delta}_N= \overline {\delta}_0$. It follows that $\gamma$ is freely homotopic to

\centerline{$\overline {\delta}_0*\gamma_1*\delta_1
*\overline {\delta}_1*\gamma_1*\dots
*\gamma_N*\delta_N
=\tilde{\gamma}_1*\tilde{\gamma}_2*\dots*\tilde{\gamma}_N$,}

\begin{figure}
\centering
\includegraphics[width=0.6\textwidth]{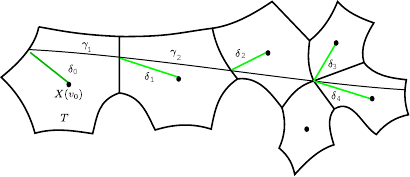}
\caption{The concatenation of arcs from the proof of Lemma \ref{homotopy}.}
\label{Figure3}
\end{figure}

\noindent where 
$\tilde{\gamma}_n=\overline {\delta}_{n-1}*\gamma_n*\delta_n$ for any $n\geq 1$. This is illustrated in Figure \ref{Figure3SystoleStory}. By definition $\tilde{\gamma}_n$ is a path joining  $X(v_{n-1})$ to $X(v_n)$. Since $\tilde{\gamma}_n$ lies in the convex octogon $T(v_{n-1}\cup T(v_n)$,  it is homotopic to the geodesic arc ${\bf e}(v_{n-1}, v_n)$ going from $X(v_{n-1})$ to $X(v_n)$.

Hence $\gamma$ is freely homotopic to

\centerline{
${\bf e}(v_{0}, v_1)*{\bf e}(v_1, v_2)
\dots *{\bf e}(v_{N-1}, v_N)$,}

\noindent which is precisely the loop 
$\vert \omega(\gamma)\vert$ in $\Cay\,W(k)$.
\end{proof}

Set $S=\{s_i\mid i\in\Z/6\Z\}$,
$R=\{s_1,s_3,s_5\}$ and $B=\{s_2,s_4,s_6\}$.
The elements of $R$, respectively of $B$, are called the {\it red letters}, respectively the blue letters.
For any word  
$w=s_{i_1}s_{i_2}\dots s_{i_N}\in\mathcal{W}_S$,
let $l_R(w)$ (respectively $l_R(w)$)  be the number of red (respectively blue) letters in $w$.

\begin{cor}\label{RBlength} Assume that $k$ is even.
Let 
$\gamma\in\Arc_T(\mathcal{H}(k))$ be a closed geodesic. Then we have

\centerline{ 
$l_R(\omega(\gamma))\geq 2k$ or  $l_B(\omega(\gamma))\geq 2k$.}
\end{cor}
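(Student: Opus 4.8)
The plan is to argue by contradiction and to reduce the statement to the combinatorial null-homotopy of Theorem~\ref{null}, transported to the surface through the homeomorphism of Lemma~\ref{homeo}. The groundwork is to observe that, with $R=\{s_1,s_3,s_5\}$ and $B=\{s_2,s_4,s_6\}$, the partition $S=R\sqcup B$ is right-angled for $W(k)$: a pair $\{s,t\}$ with $s\in R$, $t\in B$ has indices differing by $1$ (so $m_{s,t}=2$) or by $3$ (so $m_{s,t}=\infty$), hence $m_{s,t}\in\{2,\infty\}$ in all cases. Moreover $W_R=\langle s_1,s_3,s_5\rangle$ and $W_B=\langle s_2,s_4,s_6\rangle$ have all off-diagonal Coxeter entries equal to $k$ (the relevant indices differ by $2$), so
\centerline{$\gamma(W_R)=\gamma(W_B)=2k$.}
This common girth is exactly the threshold in the statement, which is why Theorem~\ref{null} is the right instrument.

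Then I would suppose, for contradiction, that $l_R(\omega(\gamma))<2k$ and $l_B(\omega(\gamma))<2k$ at once. Since $\gamma$ is a closed geodesic, the computation in the proof of Lemma~\ref{homotopy} gives $v_N=1$, i.e. $\overline{\omega(\gamma)}=1$, so $\omega(\gamma)$ is a cyclic loop. The two assumed inequalities read exactly $l_R(\omega(\gamma))<\gamma(W_R)$ and $l_B(\omega(\gamma))<\gamma(W_B)$, so Theorem~\ref{null} applies and shows that $\vert\omega(\gamma)\vert$ is null-homotopic in $\Cay^+\,W(k)$.

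Finally I would push this onto $\mathcal{H}(k)$. By Lemma~\ref{homeo} the homeomorphism $\Cay^+\,W(k)\simeq\mathcal{H}(k)$ extends the common embedding of $\Cay\,W(k)$, so null-homotopy of $\vert\omega(\gamma)\vert$ in $\Cay^+\,W(k)$ transfers to null-homotopy in $\mathcal{H}(k)$. Together with the free homotopy $\gamma\simeq\vert\omega(\gamma)\vert$ of Lemma~\ref{homotopy}, which already takes place in $\Cay\,W(k)\subset\mathcal{H}(k)$, this would render the closed geodesic $\gamma$ null-homotopic in $\mathcal{H}(k)$. But $\mathcal{H}(k)=\HP/N(k)$ is a hyperbolic surface on which $N(k)$ acts freely, and a closed geodesic there is never contractible: it lifts to an injective geodesic arc in $\HP$, which cannot be a closed loop. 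This contradiction yields $l_R(\omega(\gamma))\geq 2k$ or $l_B(\omega(\gamma))\geq 2k$, as desired.

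The step I expect to be the main obstacle is the last transfer: one must be sure that the free homotopy class produced abstractly by Theorem~\ref{null} inside the square complex $\Cay^+\,W(k)$ corresponds, under the homeomorphism of Lemma~\ref{homeo}, to the topological class of $\gamma$ in $\mathcal{H}(k)$ — concretely, that the embedding of $\Cay\,W(k)$ into $\Cay^+\,W(k)$ and its embedding into $\mathcal{H}(k)$ are identified by that homeomorphism, as established in the construction preceding Lemma~\ref{homeo}. Once this identification is granted, the rest is bookkeeping, the bound $2k$ being forced merely as the common girth $\gamma(W_R)=\gamma(W_B)=2k$.
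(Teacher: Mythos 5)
Your proposal is correct and follows essentially the same route as the paper: assume both counts are below $2k$, note that $S=R\sqcup B$ is a right-angled partition with $\gamma(W_R)=\gamma(W_B)=2k$, apply Theorem~\ref{null} to conclude $\vert\omega(\gamma)\vert$ is null-homotopic in $\Cay^+\,W(k)$, and transfer this through Lemma~\ref{homeo} and Lemma~\ref{homotopy} to contradict the fact that a closed geodesic on a hyperbolic surface is never contractible. You in fact supply slightly more justification than the paper does (the explicit index-parity check for right-angledness and the identification of the two embeddings of $\Cay\,W(k)$), but the argument is the same.
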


\begin{proof} Assume otherwise, i.e. $l_R(\omega(\gamma))< 2k$ and $l_B(\omega(\gamma))<2k$. Since $k$ is even, the partition $S=R\cup B$ is right-angled. Therefore by Theorem \ref{null}, the path $\vert \omega(\gamma)\vert$ is null-homotopic in $\Cay^+\,W(k)$. By Lemma \ref{homeo} $\Cay^+\,W(k)$ is homeomorphic to 
$\mathcal{H}(k)$ and by Lemma \ref{homotopy} $\gamma$ is freely homotopic to $\vert \omega(\gamma)\vert$. This contradicts the fact that no closed geodesic is null-homotopic on a hyperbolic surface.
\end{proof}

\bigskip\noindent
{\it 3.6 Combinatorial length versus geometric length}

\noindent
Let $\gamma\in\Arc_T(\mathcal{H}(k))$ be a geodesic arc
of $\mathcal{H}(k)$. In this section, we will compare the metric length
$L(\gamma)$ of $\gamma$ with its combinatorial length $l(\omega(\gamma))$.

\begin{lemma}\label{geolength1} Let $\gamma\in\Arc_T(\mathcal{H}(k))$ be a geodesics arc. We have $L(\gamma)>L$ whenever one of the following conditions is satisfied
\begin{enumerate}
\item $l(\omega(\gamma))=1$ and $\gamma$ joins two non-consecutive sides of a tile, or

\item $l(\omega(\gamma))=2$.
\end{enumerate}
\end{lemma}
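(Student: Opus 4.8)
The plan is to reduce everything to two elementary distance computations inside a single regular right-angled hexagon, then to handle case (2) by splitting $\gamma$ at the curve it crosses and, in the one degenerate configuration that survives, by exploiting that the shared edge is a common perpendicular.

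First I would record the trigonometric data: since each side has length $L=\arcosh 2$ we have $\cosh L=2$ and $\sinh L=\sqrt3$, and I establish two facts about the lines $\Delta_i$ supporting the sides $S_i$ of a tile. (a) If $S_i$ and $S_{i+2}$ are at cyclic distance two (say $S_1,S_3$), the intermediate side $S_{i+1}$ is perpendicular to both $\Delta_i$ and $\Delta_{i+2}$ because of the right angles of the hexagon, so $S_{i+1}$ is the common perpendicular of the ultraparallel lines $\Delta_i,\Delta_{i+2}$; hence $d(\Delta_i,\Delta_{i+2})=L$, attained only at the endpoints $v_1=S_i\cap S_{i+1}$ and $v_2=S_{i+1}\cap S_{i+2}$. (b) If $S_i$ and $S_{i+3}$ are opposite, the common perpendicular through the center has length $2\arcosh\sqrt2=\arcosh 3$, computed from the right triangle center–midpoint–vertex whose angles are $\pi/6,\pi/4,\pi/2$, so $d(\Delta_i,\Delta_{i+3})=\arcosh 3>L$. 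Since the hexagon is convex, any geodesic arc joining two of its sides is the geodesic segment between its endpoints and realizes their distance; therefore any such arc joining two non-consecutive sides has length $\ge L$, with equality only for the boundary edge $S_{i+1}$ in configuration (a). This already settles case (1): there $\gamma$ is an interior chord (it enters $T^0$, so it is not a boundary edge) joining non-consecutive sides, whence $L(\gamma)>L$.

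For case (2) I would split $\gamma=\gamma_1*\gamma_2$ at the single curve it crosses, with $\gamma_1\subset T$ running from the entry side $S_a$ to the crossed side $e=S_{i_1}$, and $\gamma_2$ lying in the neighbour $T'=s_{i_1}T$ from $e$ to the exit side $S'$. If $S_a$ and $e$ are non-consecutive in $T$, then $L(\gamma)\ge L(\gamma_1)>L$ by the bound above; symmetrically if $e$ and $S'$ are non-consecutive in $T'$. So it remains to treat the case where $S_a,e$ are adjacent and $e,S'$ are adjacent, where the right angles force $S_a\perp\Delta$ and $S'\perp\Delta$ for the line $\Delta$ of $e$.

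The delicate point, which I expect to be the main obstacle, is this final subdivision, together with keeping track of strictness. If $S_a$ and $S'$ meet $e$ at the same endpoint $v_1$, then both are perpendicular to $\Delta$ at $v_1$ and lie in opposite half-planes (as $T'$ is the reflection of $T$ across $\Delta$), hence collinear, lying on a single curve through $v_1$; the geodesic from $A\in S_a$ to $B\in S'$ would run along that curve, contradicting $\gamma\in\Arc_T(\mathcal H(k))$. If instead they meet $e$ at its two distinct endpoints $v_1,v_2$, then $e$ is perpendicular to $\Delta_a$ at $v_1$ and to $\Delta'$ at $v_2$, so $e$ is the common perpendicular of the ultraparallel lines $\Delta_a,\Delta'$ and $d(\Delta_a,\Delta')=L$. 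Since $A\in\Delta_a$, $B\in\Delta'$ and $L(\gamma)=d(A,B)$, we get $L(\gamma)\ge L$, with equality only when $A,B$ are the feet $v_1,v_2$; but that forces $\gamma=e$, again excluded because $\gamma$ meets $T^0$ and so is not a curve. Thus $L(\gamma)>L$ in every case. The recurring care is precisely this systematic exclusion of boundary edges and of arcs lying along a curve, which is exactly what the definition of $\Arc_T(\mathcal H(k))$ provides.
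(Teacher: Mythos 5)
Your proof is correct and follows essentially the same route as the paper: factor $\gamma$ at the crossed curve, reduce to case (1) whenever a sub-arc joins non-consecutive sides of a tile, and in the remaining adjacent--adjacent configuration observe that the shared edge is the common perpendicular of the two supporting lines, so that its length $L$ is a strict lower bound for any other arc between them. You are in fact slightly more complete than the paper, which cites part (1) to \cite{SS}\cite{FB} rather than deriving $d(\Delta_i,\Delta_{i+2})=L$ and $d(\Delta_i,\Delta_{i+3})=\arcosh 3>L$ from hyperbolic trigonometry, and which tacitly normalises to the $S_1(1)\to S_2(1)\to S_3(s_2)$ configuration without explicitly ruling out, as you do, the collinear case where the entry and exit sides meet the crossed edge at the same vertex.
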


\begin{proof} The first statement is well known, see e.g. \cite{SS}\cite{FB}. Let $\gamma\in\Arc_T(\mathcal{H}(k))$ be an arc  and let
$\gamma=\gamma_1*\gamma_2$ be its factorization into arcs of combinatorial length 1. 
If $\gamma_1$ or $\gamma_2$ join two non-consecutive sides of a tile, then it is already proved that $L(\gamma)>L$.

Otherwise, after applying some isometry, we can assume that $\gamma(0)$ belongs to
$S_1(1)$, then $\gamma$ crosses $S_2(1)$ and ends  on $S_3(s_2)$, where $S_i(v)$ denotes the side of index $i$ of the tile $T(v)$. Since $T(1)\cup T(s_2)$ is a convex octogon, we can lift it to the Poincar\'e
half plane. Let $\Delta_1$ and $\Delta_3$ be the line containing the arcs $S_1(1)$ and $S_3(s_2)$.
The lift of $\gamma$ joins  $\Delta_1$ and $\Delta_3$. Since
$S_2(1)=S_2(s_2)$ is the common perpendicular to $\Delta_1$ and $\Delta_3$, we have $L(\gamma)>L(S_2(1))=L$.
\end{proof}

\begin{lemma}\label{geolength2} Let 
$\gamma\in\Arc_T(\mathcal{H}(k))$ be a closed geodesic. If $l_R(\omega(\gamma))\geq 2k$ or 
$l_B(\omega(\gamma))\geq 2k$, then $\gamma$ has length $>2kL$. 
\end{lemma}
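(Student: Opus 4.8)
The plan is to exploit the $2k$ red crossings (or the $2k$ blue crossings) of $\gamma$ to cut it into $2k$ consecutive sub-arcs, each of hyperbolic length strictly greater than $L$, so that the total length exceeds $2kL$. It suffices to treat the case $l_R(\omega(\gamma))\geq 2k$, the case $l_B(\omega(\gamma))\geq 2k$ being identical after interchanging the two colours.

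Write $\omega(\gamma)=s_{i_1}\dots s_{i_N}$ and set $m:=l_R(\omega(\gamma))\geq 2k$. The red letters occur at $m$ cyclic positions, and cutting $\gamma$ at the corresponding $m$ crossing points of red curves produces $m$ consecutive arcs $\alpha_1,\dots,\alpha_m$ (indices read mod $m$), each running from one red crossing to the next, with $L(\gamma)=\sum_{j=1}^m L(\alpha_j)$. Since each $\alpha_j$ may be translated by $W(k)$ into a member of $\Arc_T(\mathcal{H}(k))$, the whole matter is reduced to the claim that $L(\alpha_j)>L$ for every $j$.

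To establish the claim I would split into two cases according to $l(\omega(\alpha_j))$. If $\alpha_j$ crosses no curve strictly between its two red endpoints, then $l(\omega(\alpha_j))=1$ and $\alpha_j$ lies in a single tile, joining two of its red sides; as it passes through the interior these two sides are distinct, and any two distinct red sides of the hexagon are non-consecutive, so Lemma \ref{geolength1}(1) gives $L(\alpha_j)>L$. Otherwise $\alpha_j$ crosses at least one curve in between --- necessarily blue, since there is no red crossing strictly inside --- whence $l(\omega(\alpha_j))\geq 2$; restricting to the initial sub-arc of combinatorial length $2$ and invoking Lemma \ref{geolength1}(2) again yields $L(\alpha_j)>L$. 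Summing over $j$ gives $L(\gamma)>mL\geq 2kL$, which is the assertion.

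The point that demands the most care is the behaviour at the vertices of $\tau_k$. There the convention defining $\omega$ bundles a red and a blue crossing at one point, so an inter-red arc can begin or end at a corner of a tile rather than transversally on the interior of a side. One must check that such an arc, constrained to pass through the tile interior, still has length $>L$; this is precisely what Lemma \ref{geolength1} secures, since in the regular right-angled hexagon the distance from a corner to any non-adjacent side equals $L$ --- the joining edge meets that side perpendicularly --- and a genuine interior arc strictly exceeds this perpendicular. Thus the degenerate configurations introduce no shorter pieces, and the estimate $L(\gamma)>2kL$ holds for every closed geodesic $\gamma$ in $\Arc_T(\mathcal{H}(k))$.
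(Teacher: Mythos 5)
Your proposal is correct and follows essentially the same route as the paper: decompose $\gamma$ at its red crossings into $l_R(\omega(\gamma))$ consecutive arcs joining red sides, show each has length $>L$ via the two cases of Lemma \ref{geolength1} (combinatorial length $1$ between non-consecutive red sides, or combinatorial length $\geq 2$), and sum. Your added remarks on reducing to an initial sub-arc of combinatorial length $2$ and on the vertex convention are careful refinements of the same argument rather than a different approach.
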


\begin{proof}
We can assume that $l_R(\omega(\gamma))\geq 2k$, and write $\gamma=\gamma_1*\gamma_2*\dots*\gamma_N$, where $N=l_R(\omega(\gamma))$ and each $\gamma_n$ is an arc joining a red side to another red side. 

If $l(\gamma_n)=1$, then $\gamma$ joins two red sides of the same tile. Since these sides are not consecutive, we have $L(\gamma_n)>L$ by  the part (1) of Lemma \ref{geolength1}. Otherwise, we have $l(\gamma_n)\geq 2$ and $L(\gamma_n)>L$ by the part (2)
of Lemma \ref{geolength1}. Therefore each arc $\gamma_n$ has length $>L$. It follows that 

\centerline{$L(\gamma)=
\sum_{1\leq n\leq N}\,L(\gamma_n)>NL\geq 2kL$.} 
 \end{proof}

\bigskip\noindent
{\it 3.7 Systoles of $\mathcal{H}(k)/H$}

\noindent
Assume now that $k$ is even. It follows from 
Corollary \ref{RBlength} and
Lemma \ref{geolength2} that the systoles of
${\mathcal H}(k)$ are the curves of the tesselation. Let $H\subset W(k)^+$ be a subgroup satisfying the conditions (11.1) and 
(11.2). Then all curves of the tessalation
 in ${\mathcal H}(k)/H$
have length $2kL$.  Determining when these curves of $\tau_{H}$ are the systoles is a delicate
question. The next criterion  provides a partial answer.

For $w\in W(k)$, set $H^w=wHw^{-1}$. Also set

\centerline{$B_{4k}:=\{w\in W(k)\mid l(w)\leq 4k\}$.}

\begin{crit}\label{systole} Let $H\subset W(k)^+$ be a subgroup such that $H^w\cap B_{4k}=\{1\}$ for all $w\in W(k)$.

Then $H$ acts freely on $\mathcal{H}(k)$ and the tesselation $\tau_{H}$ is $2k$-regular. Moreover, the set of systoles of $\mathcal{H}(k)/H$ is exactly the set of curves of $\tau_{H}$.
\end{crit}

\begin{proof} By Lemma \ref{condition}, $H$ acts freely on $\mathcal{H}(k)$ and the tesselation $\tau_{H}$ is $2k$-regular. The curves of the tesselation  $\tau_{H}$ have length $2kL$. Hence, it remains to prove that  any closed geodesic $\overline{\gamma}$ of $\mathcal{H}(k)/H$, which is not a curve, has length $L(\overline{\gamma})>2kL$.

The choice of a distinguished tile $\overline {T}$ was arbitrary,
so we can assume that $\overline{\gamma}$ 
intersects the interior of $\overline {T}$.
 Hence there is a geodesic arc $\gamma\in\Arc_T(\mathcal{H}(k))$ which lifts $\overline{\gamma}$. Since $L(\overline{\gamma})=L(\gamma)$, it is enough to show that $L(\gamma)>2kL$.

If $l(\omega(\gamma))>4k$, then by Lemma \ref{geolength1}, we have $L(\gamma)>2kL$. Assume now that $l(\omega(\gamma))\leq 4k$. Since we have 

\centerline{$H^w\cap B_{4k}=\{1\}$ 
for all $w\in W(k)$,}

\noindent it follows that $\gamma$ is a closed geodesic of $\mathcal{H}(k)$. By Corollary \ref{RBlength}, we have 

\centerline{$l_R(\omega(\gamma))\geq 2k$ or 
$l_B(\omega(\gamma))\geq 2k$,}

\noindent hence, by Lemma \ref{geolength1}, we have 
$L(\gamma)>2kL$.
\end{proof}
%%%%%%%%%%%%%%%%%%%%%%%%%%%%%%%%%%%%%%%%%%%%%%%%%%%%%%%
%%%%%%%%%%%%%%%%%%%%%%%%%%%%%%%%%%%%%%%%%%%%%%%%%%%%%%
\section{The subgroup $H(k)$ of $W(k)$}

\noindent For simplicity, we will assume that the integer $k$ is $\geq 3$. Set  $K:=\Q(\cos \pi/k)$ and let $\mathcal{O}$ be the ring of integers of the field $K$. 

In this section, we use the Tits representation $\rho:W(k)\to GL_6(K)$ to find a subgroup $H(k)$ of $W(k)$ satisfying the hypothesis of Criterion \ref{systole} with index $[W(k):H(k)]\leq 3^{72 k\phi(2k)}$, see Proposition \ref{subgroup}.

\bigskip\noindent
{\it 4.1 The Tits representation}

\noindent In \cite{T61}, Tits defined a faithful representation  of any Coxeter group.
References  are \cite{T61} or \cite{Da} Appendix D. Here we will 
describe his result for the groups $W(k)$.

Let $(\alpha_i)_{i\in \Z/6\Z}$ be a basis of the six-dimensional vector space $K^6$. There is a symmetric bilinear form $B$ on $K^6$ given by
\begin{enumerate}
\item $B(\alpha_i,\alpha_i)=2$

\item $B(\alpha_i,\alpha_j)=0$, if $i-j=\pm1$,

\item $B(\alpha_i,\alpha_j)=-2\cos(\pi/k)$,
$i-j=\pm2$, and 

\item $B(\alpha_i,\alpha_j)=-2$, if 
$i-j=\pm 3$.
\end{enumerate}

\noindent For any $\alpha\in K^6$ with $B(\alpha,\alpha)=2$, let $s_\alpha$ be the  hyperplane reflection, defined by

\centerline{
$s_\alpha(\lambda)=
\lambda-B(\alpha,\lambda) \alpha$,}

\noindent for any $\lambda\in K^6$. The {\it Tits representation} of $W(k)$ is the group homomorphism 

\centerline{$\rho: W(k)\to \GL_6(K)$}

\noindent  defined on the generators by 
$\rho(s_i)=s_{\alpha_i}$, for any $i\in \Z/6\Z$. 

\begin{thm}\label{Tits61} (Tits \cite{T61}\cite{Da}) 
The representation  $\rho$ is faithful.
\end{thm}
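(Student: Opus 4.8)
The plan is to give the standard proof via the geometric (root-system) representation, specialised to the groups $W(k)$. First I would verify that $\rho$ is a well-defined homomorphism. Each $\rho(s_i)=s_{\alpha_i}$ is an involution, and since every $s_{\alpha_i}$ preserves $B$ (a $B$-reflection is a $B$-isometry when $B(\alpha_i,\alpha_i)=2$), the whole image preserves $B$. The only defining relations to check beyond $s_i^2=1$ are the braid relations with finite exponent, namely $(s_is_{i+1})^2=1$ and $(s_is_{i+2})^k=1$. For such a pair with $m:=m_{ij}<\infty$ I restrict to the plane $\langle\alpha_i,\alpha_j\rangle$, whose Gram matrix $\left(\begin{smallmatrix}2 & -2\cos(\pi/m)\\ -2\cos(\pi/m) & 2\end{smallmatrix}\right)$ has determinant $4\sin^2(\pi/m)>0$ and is therefore positive definite. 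On this plane $\rho(s_i)\rho(s_j)$ is a rotation by $2\pi/m$, hence of order exactly $m$, and on the $B$-orthogonal complement (a genuine complement, as the plane is nondegenerate) it acts trivially; thus $(\rho(s_i)\rho(s_j))^m=1$. The pairs with $i-j=\pm3$ have $m_{ij}=\infty$ and impose no relation, so the positive-semidefinite planes they produce need not be analysed at this stage.

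Next I would set up the root system $\Phi=\{\rho(w)\alpha_i\mid w\in W(k),\ i\in\Z/6\Z\}$. Since $B$ is preserved, every root $\beta$ satisfies $B(\beta,\beta)=2$; in particular a root proportional to some $\alpha_i$ must equal $\pm\alpha_i$. The central step is the \emph{positivity theorem}: every root is either a nonnegative or a nonpositive combination of the $\alpha_i$ (call these positive and negative roots). I would prove this by induction on the Bruhat length $l(w)$, simultaneously with the length--sign correspondence $l(ws_i)>l(w)\iff \rho(w)\alpha_i$ is positive. The linchpin of the induction is that each simple reflection $s_{\alpha_t}$ alters only the $\alpha_t$-coordinate of a vector: it sends $\alpha_t\mapsto-\alpha_t$ and fixes every other coordinate. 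Consequently, if $\beta$ is a positive root with $\beta\neq\alpha_t$, some coefficient on an $\alpha_i$ with $i\neq t$ is strictly positive (otherwise $\beta=c\alpha_t=\alpha_t$), so $s_{\alpha_t}(\beta)$ still has a positive coefficient and, being a root, is positive. Thus $s_{\alpha_t}$ permutes the positive roots other than $\alpha_t$, which drives the inductive step in the usual way.

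Granting the positivity theorem, faithfulness is immediate. Suppose $\rho(w)=\id$ with $w\neq1$, and take a reduced word $w=s_{i_1}\cdots s_{i_n}$ with $n=l(w)\geq1$ (reduced expressions being governed by Theorem \ref{T68}). Since $l(ws_{i_n})=n-1<n$, the length--sign correspondence forces $\rho(w)\alpha_{i_n}$ to be a negative root; but $\rho(w)=\id$ gives $\rho(w)\alpha_{i_n}=\alpha_{i_n}$, a positive root, a contradiction. Hence $\Ker\rho=1$. The main obstacle is the positivity theorem: the simultaneous induction must be organised carefully, and the argument must rely only on the reflection formula and on $B(\beta,\beta)=2$ for roots, never on $B$ being definite on all of $K^6$ — this is essential because the pairs with $m_{ij}=\infty$ make $B$ merely positive semidefinite on the corresponding planes while still contributing infinitely many roots to $\Phi$. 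Everything else is the routine linear algebra sketched above, and for the groups $W(k)$ no feature beyond the general Coxeter case intervenes, so one may alternatively simply cite \cite{T61} or \cite{Da}, Appendix D.
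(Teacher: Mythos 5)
The paper offers no proof of this statement: it is Tits's classical theorem, quoted with attribution and references to \cite{T61} and \cite{Da}, Appendix D, which is also the fallback you yourself propose. Your outline is the standard positive-root argument from exactly those sources (well-definedness of $\rho$ via the rank-two Gram matrices, the positivity theorem proved by induction together with the length--sign correspondence, and faithfulness from a reduced expression ending in $s_{i_n}$); it is sound, and the one genuinely delicate step --- organising the simultaneous induction without using definiteness of $B$ --- is correctly flagged rather than silently glossed over.
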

%\pagebreak

\bigskip\noindent
{\it 4.2 The $l_\infty$-norms of $K$ and $\End(K^6)$}

\noindent Let $\mathcal{F}$ be the set of field embeddings $v:K\to \R, x\mapsto x_v$. The field $K$ is totally real, and its degree is $\phi(2k)/2$, where $\phi$ is the Euler totient function. It follows that $\Card\mathcal{F}=\phi(2k)/2$.

For $x\in K$, let $\lVert x\rVert$ be its {\it $l_\infty$-norm} defined by

\centerline{
$\lVert x\rVert=\Max_{v\in\mathcal{F}} \,\vert x_v\vert$.}

\noindent  We have 
$\lVert xy\rVert\leq \lVert x\rVert \lVert y\rVert$.
This norm should not be confused with the usual norm 
$N_{K/\Q}(x):=\prod_{v\in\mathcal{F}}\, x_v$, which is a determinant. Let $x\in \mathcal{O}\setminus \{0\}$. Since
 $\vert N_{K/\Q}(x)\vert $ is a positive integer,   we have  

\centerline{$\lVert x\rVert\geq 1$ if $x\in \mathcal{O}\setminus \{0\}$.} 

\noindent For $i,\,j\in\Z/6\Z$, let 
$E_{i,j}\in \End(K^6)$ be the linear map

\centerline{
$E_{i,j}:v\in K^6\mapsto 
\langle\alpha_j\mid v\rangle\, \alpha_i$.}

Since $B(\alpha_i,\alpha_j)$ is a circulant matrix, an easy computation shows that

\centerline{$\det\,B(\alpha_i,\alpha_j)=
64(3\cos^4(\pi/k)-4\cos^3(\pi/k)$}
\noindent where $\mu_{6}$ is the group of $6^{th}$ roots of unity.
 Since $k\geq 3$, the bilinear form $B$ is 
 nondegenerate and  the set 

\centerline{$\{E_{i,j}\mid i,j\in\Z/6\Z\}$}

\noindent
is a basis of $\End(K^6)$. Therefore any element 
$A\in \End(K^6)$ can be written as 
$A=\sum_{i,j\in\Z/6\Z}\,a_{i,j}\,E_{i,j}$,
where $a_{i,j}\in K$.
Its {\it $l_\infty$-norm} $\lVert A\rVert$ is defined by

\centerline{
$\lVert A\rVert=\Max_{i,j\in\Z/6\Z}
 \,\lVert a_{i,j}\rVert$.}
 
\noindent 
For each $i\in\Z/6\Z$, set $F_i=E_{i,i}$ and 
for any word $w=i_1\dots i_n$ on the alphabet
$\Z/6\Z$, set

\centerline{$F_w=F_{i_1}\dots F_{i_n}$.}

\noindent  The $l_\infty$-norms of  $\End(K^6)$
is not multiplicative, i.e.
$\lVert AB\rVert$ is not necessarily $\leq \lVert A\rVert \lVert B\rVert$. Nevertheless, 
$\lVert F_w\rVert$ can still be estimated, as shown in the next lemma.

\begin{lemma}\label{operatornorm} Let $w$ be a word of length $n$ over
the alphabet $\Z/6\Z$. We have

\centerline{$\lVert F_w\rVert\leq
 2^n$.}
\end{lemma}

\begin{proof} The Galois conjugates of $\cos(\pi/k)$ are the numbers $\cos(m\pi/k)$, for $m$ prime to $k$, hence we have 
$\lVert B(\alpha_i,\alpha_j)\rVert\leq 2$ for any
$i,j\in\Z/6\Z$. Since
 $E_{i_1,j_1}E_{i_2,j_2}=
 B( \alpha_{j_1}, \alpha_{i_2}) E_{i_1,j_2}$,
 for any $i_1,j_1,i_2,j_2\in \Z/6\Z$,  
 it can be proven by induction over $n$ that
 $F_w= a_w E_{i_1,i_n}$ for some $a_w\in \mathcal{O}$ with
 $\lVert a_w\rVert\leq
 2^{n-1}<2^n$, from which the lemma follows.
 \end{proof}

\bigskip\noindent
{\it 4.3 The  $l_\infty$-norm of the Tits representation}

\noindent Recall that $\rho:W(k)\to\GL_6(K)$ denotes the Tits representation.

\begin{lemma} For any $w\in W(k)$, we have

\centerline{$\lVert \rho(w)-1\rVert<
\, 3^{l(w)}$.}
\end{lemma}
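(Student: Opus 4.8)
The plan is to expand $\rho(w)$ as a product of elementary reflections and then regroup the resulting terms by subwords. First I would observe that $\rho(s_i)=1-F_i$: indeed, since $F_i=E_{i,i}$ sends $\lambda$ to $\langle\alpha_i\mid\lambda\rangle\,\alpha_i=B(\alpha_i,\lambda)\alpha_i$, the reflection formula gives $s_{\alpha_i}(\lambda)=\lambda-B(\alpha_i,\lambda)\alpha_i=\lambda-F_i(\lambda)$. Fixing a reduced expression $w=\overline{s_{i_1}\cdots s_{i_n}}$ with $n=l(w)$, and using that $\rho$ is a homomorphism, this gives
$$\rho(w)=\prod_{j=1}^{n}(1-F_{i_j}).$$
The reducedness is used only to ensure that the word length equals the Bruhat length $l(w)$, which is what produces the exponent $3^{l(w)}$ rather than a weaker bound.

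Next I would expand this product completely. Each subset $J=\{j_1<\cdots<j_m\}\subseteq\{1,\dots,n\}$ contributes the monomial $(-1)^{m}F_{i_{j_1}}\cdots F_{i_{j_m}}=(-1)^{|J|}F_{w_J}$, where $w_J$ is the corresponding subword of length $m$. Subtracting the empty-subset term, which is the identity, yields
$$\rho(w)-1=\sum_{\emptyset\neq J\subseteq\{1,\dots,n\}}(-1)^{|J|}F_{w_J}.$$
Now the key point is that the $l_\infty$-norm on $\End(K^6)$ is defined coefficientwise and is therefore subadditive, so the triangle inequality applies to this sum even though the norm is not multiplicative. Combining it with Lemma \ref{operatornorm}, which gives $\lVert F_{w_J}\rVert\leq 2^{m}$ for a subword of length $m$, and counting the $\binom{n}{m}$ subwords of each length, I obtain
$$\lVert\rho(w)-1\rVert\leq\sum_{\emptyset\neq J}\lVert F_{w_J}\rVert\leq\sum_{m=1}^{n}\binom{n}{m}2^m=(1+2)^n-1=3^n-1.$$
Since $3^n-1<3^n=3^{l(w)}$, the claimed bound follows at once.

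I do not expect a serious obstacle: the argument is a direct expansion resting on two facts already available, namely Lemma \ref{operatornorm} and the faithfulness/definition of the Tits representation. The only point requiring genuine care is methodological: because $\lVert\cdot\rVert$ is \emph{not} multiplicative, one must first break $\rho(w)-1$ into its additive monomial pieces and only then estimate, since bounding $\lVert\rho(w)\rVert$ directly as a product of the factor norms $\lVert 1-F_{i_j}\rVert$ is not legitimate. It is precisely the per-monomial estimate $\lVert F_{w_J}\rVert\leq 2^m$ that makes the subadditive triangle inequality effective, and the binomial identity $\sum_{m}\binom{n}{m}2^m=3^n$ is what converts the factor $2$ from Lemma \ref{operatornorm} into the base $3$ appearing in the statement.
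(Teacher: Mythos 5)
Your proof is correct and follows essentially the same route as the paper: write $\rho(w)-1$ as a sum over nonempty subwords of a reduced expression, apply the coefficientwise triangle inequality together with the bound $\lVert F_v\rVert\leq 2^{l(v)}$ from Lemma \ref{operatornorm}, and sum the binomial series to get $3^n-1<3^n$. The paper's argument is identical (it phrases the subwords as a multiset $\mathcal{V}$ rather than indexing by subsets $J$, which is a purely notational difference), and your remark about why one must expand additively before estimating, since the norm is not submultiplicative, matches the paper's intent exactly.
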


\begin{proof} Set $n=l(w)$ and let $w=s_{i_1}\dots s_{i_n}$ be a reduced decomposition of $w$.
Let $\mathcal{V}$ be the collection of all nonempty subwords of the word $s_{i_1}\dots s_{i_n}$. For $l>0$, set $\mathcal{V}_l=\{v\in \mathcal{V}\mid l(v)=l\}$.
Since some subwords appear more than once, $\mathcal{V}$ and $\mathcal{V}_l$ are sets with multiplicity. For example, if
$w=s_1s_2s_3s_2$, the subword $s_1s_2$ appears twice. 

We have $\rho(s_i)=1-F_i$, for any $i\in\Z/6\Z$. Hence we obtain

\centerline{$\rho(w)-1=\sum_{v\in \mathcal{V}}
\,(-1)^{l(v)} F_v$, and}

\centerline{$\lVert\rho(w)-1\rVert\leq \sum_{v\in \mathcal{V}}
\lVert F_v\rVert$.}

\noindent 
By Lemma \ref{operatornorm}, we have

\centerline{
$\lVert F_v\rVert\leq
 2^{l(v)}$.}

\noindent Since $\Card \mathcal{V}_l=(^n_l)$, we obtain

\centerline{
$\lVert \rho(w)-1\rVert\leq\sum_{1\leq l\leq n} (^n_l)2^l=(3^n-1) <3^n$.}
\end{proof}

\bigskip\noindent
{\it 4.4 Finite quotients of $W(k)$}

\noindent Set $\mathcal{R}:=
\oplus_{i,j\in\Z/6\Z}\,\mathcal{O} E_{i,j}$.
Since $E_{i_1,j_1}E_{i_2,j_2}=
 B( \alpha_{j_1}, \alpha_{i_2}) E_{i_1,j_2}$,
 $\mathcal{R}$ is a subring of $\End(K^6)$.
 Since $s_{\alpha_i}=1-E_{i,i}$, we have
 $\rho(W(k))\subset \mathcal{R}^*$, where
 $\mathcal{R}^*$ is the group of invertible elements of
 $\mathcal{R}$. Set

\centerline{$H(k)=\{w\in W(k)\mid \rho(w)-1\in
3^k\mathcal{R}\}$.}

For any integer $n>0$, set

\centerline{$B_n=\{w\in W(k)\mid l(w)\leq n\}$.}

\begin{prop}\label{subgroup} The subgroup $H(k)$ is normal and lies inside $W(k)^+$. Moreover we have

\begin{enumerate}
\item $B_{4k}\cap H(k)=\{1\}$, and

\item $\Card W(k)/H(k)\leq 3^{72 k\phi(2k)}$.
\end{enumerate}
\end{prop}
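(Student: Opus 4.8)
The plan is to establish the three assertions about $H(k)$ in sequence, all flowing from the norm estimate $\lVert \rho(w)-1\rVert < 3^{l(w)}$ proved in the previous lemma, together with the reduction-mod-$3^k$ structure on the ring $\mathcal{R}$. First I would verify that $H(k)$ is a normal subgroup contained in $W(k)^+$. Normality and the subgroup property should follow from the fact that $H(k)$ is the kernel of the composite homomorphism $W(k) \to \mathcal{R}^* \to (\mathcal{R}/3^k\mathcal{R})^*$, since $\rho(w)-1 \in 3^k\mathcal{R}$ is precisely the condition $\rho(w) \equiv 1 \pmod{3^k\mathcal{R}}$. To see $H(k) \subset W(k)^+$, I would argue that any reflection $s_i$ satisfies $\rho(s_i) = 1 - F_i$ with $F_i \notin 3^k\mathcal{R}$ (as $F_i = E_{i,i}$ has a unit coefficient), so more generally reducing mod $3^k$ detects the sign; concretely, the sign homomorphism $\epsilon$ should factor through the reduction, so the determinant or an explicit parity computation shows $\epsilon(w)=1$ for $w \in H(k)$.

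For assertion (1), that $B_{4k} \cap H(k) = \{1\}$, the key is the norm bound. If $w \in H(k)$ with $w \neq 1$, then $\rho(w) \neq 1$ by faithfulness (Theorem \ref{Tits61}), so $\rho(w)-1$ is a nonzero element $\sum a_{i,j} E_{i,j}$ of $3^k\mathcal{R}$, meaning each coefficient $a_{i,j} \in 3^k\mathcal{O}$. Taking a nonzero coefficient $a_{i,j}$, I would use $\lVert x \rVert \geq 1$ for $x \in \mathcal{O}\setminus\{0\}$ to get $\lVert a_{i,j}\rVert \geq 3^k$, hence $\lVert \rho(w)-1\rVert \geq 3^k$. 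On the other hand, if $l(w) \leq 4k$ the previous lemma would give $\lVert \rho(w)-1\rVert < 3^{l(w)} \leq 3^{4k}$, which is consistent, so a naive application is not enough. The correct reading is that I need $l(w)$ small enough that $3^{l(w)} \leq 3^k$, i.e. $l(w) < k$; since $4k > k$, the bound as stated cannot be the whole story, so I expect the real argument replaces the crude exponent by a sharper count. I would instead bound $\lVert \rho(w)-1\rVert$ using the subword expansion $\rho(w)-1 = \sum_{v \in \mathcal{V}}(-1)^{l(v)} F_v$ together with $\lVert F_v\rVert \leq 2^{l(v)}$, and observe that for $w \in W(k)^+$ the support constraints in the Coxeter group force many subwords to collapse; the honest estimate should yield $\lVert \rho(w)-1\rVert < 3^k$ whenever $l(w) \leq 4k$ and $w \neq 1$, contradicting $\lVert \rho(w)-1\rVert \geq 3^k$. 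This tension is the main obstacle: reconciling the crude $3^{l(w)}$ bound with the requirement at length $4k$ will require exploiting extra structure, presumably that $3^k$ in the definition of $H(k)$ was chosen precisely so that the inequality is strict at the relevant lengths.

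For assertion (2), the index bound $\Card W(k)/H(k) \leq 3^{72k\phi(2k)}$, I would use that $H(k)$ is the kernel of $W(k) \to (\mathcal{R}/3^k\mathcal{R})^*$, so the index is at most the cardinality of the image, which is bounded by $\Card(\mathcal{R}/3^k\mathcal{R})^* \leq \Card(\mathcal{R}/3^k\mathcal{R})$. Now $\mathcal{R} = \oplus_{i,j}\mathcal{O}E_{i,j}$ is a free $\mathcal{O}$-module of rank $36$, so $\mathcal{R}/3^k\mathcal{R} \cong (\mathcal{O}/3^k\mathcal{O})^{36}$, giving $\Card(\mathcal{R}/3^k\mathcal{R}) = \Card(\mathcal{O}/3^k\mathcal{O})^{36}$. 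Since $\mathcal{O}$ has rank $[K:\Q] = \phi(2k)/2$ over $\Z$, I would compute $\Card(\mathcal{O}/3^k\mathcal{O}) = (3^k)^{\phi(2k)/2} = 3^{k\phi(2k)/2}$, whence $\Card(\mathcal{R}/3^k\mathcal{R}) = 3^{36 \cdot k\phi(2k)/2} = 3^{18k\phi(2k)}$. This gives a bound of $3^{18k\phi(2k)}$, comfortably under the claimed $3^{72k\phi(2k)}$, so the slack in the exponent $72$ versus $18$ suggests the paper may use a coarser estimate (for instance bounding the image by the full additive group count or allowing for the failure of $\mathcal{O}/3^k\mathcal{O}$ to be a quotient when $3$ ramifies or the abstract ring $\mathcal{R}$ is larger than the $\mathcal{O}$-span actually used). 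I would carry out the counting carefully, tracking the rank of $\mathcal{R}$ as an $\mathcal{O}$-module and the index $[\mathcal{O}:\Z[\cos\pi/k]]$ if the ring of integers is replaced by a subring, which is the only place I foresee the constant inflating to $72$.
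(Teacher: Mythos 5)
Your overall strategy (reduction of $\rho$ modulo a power of $3$, the norm estimate $\lVert\rho(w)-1\rVert<3^{l(w)}$, faithfulness of $\rho$, and a counting argument for the index) is exactly the paper's, and your treatment of normality, of $H(k)\subset W(k)^+$ via the determinant, and of the counting in (2) matches the paper's proof. But your proof of assertion (1) has a genuine gap, which you yourself flag as ``the main obstacle'': with the modulus $3^k$ you only get $\lVert A\rVert<3^{3k}$, not $\lVert A\rVert<1$, and your proposed escape route --- sharpening the subword expansion to prove $\lVert\rho(w)-1\rVert<3^k$ for all nontrivial $w$ of length at most $4k$ ``by exploiting support constraints'' --- is neither carried out nor plausible as stated. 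You have correctly detected an inconsistency, but misdiagnosed its source.

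The resolution is that the congruence modulus in the definition of $H(k)$ must be $N:=3^{4k}$ rather than $3^k$ (the displayed definition in the paper is a typo; the paper's own proof introduces $N=3^{4k}$ and writes $\rho(w)-1=NA$). With this modulus everything closes at once: for $w\in B_{4k}\cap H(k)$ one has $\lVert NA\rVert=\lVert\rho(w)-1\rVert<3^{4k}=N$, hence $\lVert A\rVert<1$, hence $A=0$ since its coefficients lie in $\mathcal{O}$ and every nonzero element of $\mathcal{O}$ has $l_\infty$-norm at least $1$, and then $w=1$ by Theorem \ref{Tits61}. Moreover it explains the factor of $4$ you puzzled over in assertion (2): your count $\Card(\mathcal{R}/3^k\mathcal{R})=3^{18k\phi(2k)}$ becomes $\Card(\mathcal{R}/N\mathcal{R})=N^{18\phi(2k)}=3^{72k\phi(2k)}$, which is exactly the stated bound --- the exponent $72$ is not slack but the signature of the modulus $3^{4k}$. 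Had you pushed your (correct) observation that ``$18$ versus $72$'' differs by precisely the factor $4$ appearing in ``$l(w)\leq 4k$'', you would likely have found the intended definition yourself.
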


\begin{proof}

\noindent
The group 
$\mathcal{R}(k):=\mathcal{R}^*\cap(1+3^k\mathcal{R})$
is a normal subgroup of $\mathcal{R}^*$, hence
$H(k)$ is normal.  Since  we have $\det \rho(h)\equiv 1\mod 3$, for any $h\in H(k)$, the group $H(k)$ lies in $W(k)^+$. It remains to prove Assertions (1) and (2).

\smallskip\noindent
{\it 1. Proof that $B_{4k}\cap H(k)=\{1\}$.}
Let $w\in B_{4k}\cap H(k)$. By definition, we have
$\rho(w)-1=3^kA$, where $A$ belongs to $\mathcal{R}$.
.  
By Lemma \ref{operatornorm}, we have

$$\lVert 3^k A\rVert=\lVert\rho(w)-1\rVert<
3^{4k}=N,$$

\noindent therefore $\lVert A\rVert<1$.
Since for any $a\in\mathcal{O}$, we have $a=0$ or 
$\lVert a\rVert\geq 1$ and since
$\lVert A\rVert=\Max \lVert a_{i,j}\rVert$, we have $A=0$.  By Theorem \ref{Tits61}, it follows that $w=1$.

\smallskip\noindent
{\it 2. Proof that  $\Card W(k)/H(k)\leq 3^{72 k\phi(k)}$.}

By definition, there is an embedding

\centerline{$W(k)/H(k)
\subset \mathcal{R}/N\mathcal{R}$.}

\noindent As an abelian group, $\mathcal{O}$ is isomorphic to
$\Z^{\phi(k)/2}$, hence 
$\mathcal{R}/N\mathcal{R}\simeq \Z/N\Z^{18 \phi(k)}$. It follows that

\centerline{
$\Card W(k)/H(k)\leq N^{18 \phi(k)}=3^{72 k\phi(k)}$.}
\end{proof}

%%%%%%%%%%%%%%%%%%%%%%%%%%%%%%%%%%%%%%%%%%%%%%%%%%
%%%%%%%%%%%%%%%%%%%%%%%%%%%%%%%%%%%%%%%%%%%%%%%%%%%%%
\section{Bounds on $\Fill(g)$}

The last step of the proof of the bound on $\Fill(g)$ stated in the Introductin involves the factor
$57/\sqrt{\ln\ln\ln g}$. This will now be shown to be a consequence of a 1904 result of E. Landau.

\bigskip\noindent
{\it 5.1 The set $B$ and Landau's Theorem}

\noindent
Let $p_1<p_2<\dots$ be the ordered list of all odd prime numbers. For any $n\geq 1$, set
$q_n=2\prod_{k=1}^n\,p_i$, and set
$B:=\{q_1, q_2\dots\}=\{6,30,210\dots\}$. The following classical theorem is an improvement  of the prime number Theorem of de la Vall\'ee Poussin \cite{VP} and Hadamard \cite{H}.

\begin{thm} (Landau \cite{L}\label{Landau}) While $k$ varies over $B$, we have

$$\phi(k)\sim  e^{-\gamma}{k\over\ln\ln k},$$

\noindent where $\gamma=0.577\dots$ is Euler's constant.
\end{thm}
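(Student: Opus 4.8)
The plan is to reduce the statement to two classical analytic facts about primes, and then to track how the nested logarithm $\ln\ln k$ relates to the largest prime factor of $k$. First I would exploit the multiplicative formula for the totient. Each $k=q_n$ is squarefree, and its prime divisors are exactly the primes $p\leq p_n$ (including $2$), so that
$$\frac{\phi(q_n)}{q_n}=\prod_{p\leq p_n}\left(1-\frac{1}{p}\right).$$
This turns the problem into understanding the asymptotics of a product over primes, together with an estimate for $\ln\ln q_n$.

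For the product, I would invoke Mertens' third theorem, $\prod_{p\leq x}(1-1/p)\sim e^{-\gamma}/\ln x$ as $x\to\infty$, which is precisely the source of the constant $e^{-\gamma}$. Applying it with $x=p_n$ gives
$$\frac{\phi(q_n)}{q_n}\sim\frac{e^{-\gamma}}{\ln p_n}.$$
It then remains to replace $\ln p_n$ by $\ln\ln q_n$. Taking logarithms, $\ln q_n=\sum_{p\leq p_n}\ln p=\vartheta(p_n)$, the Chebyshev function. By the prime number theorem of \cite{VP}\cite{H} in the form $\vartheta(x)\sim x$, we get $\ln q_n\sim p_n$; since $p_n\to\infty$, applying $\ln$ once more preserves the equivalence and yields $\ln\ln q_n\sim\ln p_n$. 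Substituting into the previous display, and using that $a_n\sim b_n$ with $a_n,b_n\to\infty$ gives $1/a_n\sim 1/b_n$, I would obtain $\phi(q_n)/q_n\sim e^{-\gamma}/\ln\ln q_n$, which is exactly the assertion of Theorem \ref{Landau}.

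The genuinely substantive input is Mertens' theorem, together with the prime number theorem that upgrades the elementary Chebyshev bound $\vartheta(x)\asymp x$ to the sharp $\vartheta(x)\sim x$; everything else is the bookkeeping of composing two asymptotic equivalences and checking that applying $\ln$ to a quantity tending to infinity respects $\sim$. The only point needing a little care is this final composition: one must ensure the error in $\ln q_n\sim p_n$ is small enough that a second logarithm does not destroy the asymptotic. But since $\ln q_n=p_n(1+o(1))$ and $\ln p_n\to\infty$, we have $\ln\ln q_n=\ln p_n+o(1)=\ln p_n(1+o(1))$, so the equivalence survives and no difficulty arises.
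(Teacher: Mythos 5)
Your argument is correct, and it is essentially the standard proof: the paper itself gives no proof of this statement, delegating it entirely to Hardy and Wright (Theorem 328), where the argument runs exactly as you describe --- the totient product over the primes dividing the primorial, Mertens' third theorem to produce the constant $e^{-\gamma}$, and the identity $\ln q_n=\vartheta(p_n)$ to convert $\ln p_n$ into $\ln\ln q_n$. Your handling of the final composition of asymptotics is also fine. One small remark: the prime number theorem is stronger than you actually need here. Since a second logarithm is applied, the elementary Chebyshev bounds $c_1x\leq\vartheta(x)\leq c_2x$ already give $\ln\ln q_n=\ln\vartheta(p_n)=\ln p_n+O(1)\sim\ln p_n$, so the only genuinely deep input is Mertens' theorem (which predates the prime number theorem); the paper's framing of Landau's result as an ``improvement'' of the prime number theorem refers to the full statement $\liminf_n \phi(n)\ln\ln n/n=e^{-\gamma}$ over all integers, not to the restricted asymptotic along the primorials that is all that is needed, and all that you prove, here.
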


For the proof, see \cite{HW}, Theorem 328 p. 352.

\bigskip\noindent
{\it 5.2 The constant $\delta=12\sqrt{e^{-\gamma}\ln 3}=9.42\dots$} 

\noindent 
Let $k\geq 3$. By Proposition \ref{subgroup}, 
there is a subgroup $H(k)\subset W(k)^+$ such that

\begin{enumerate}
\item $B_{4k}\cap H(k)=\{1\}$, and

\item $\Card W(k)/H(k)\leq 3^{72 k\phi(2k)}$.
\end{enumerate}

\noindent By Lemma \ref{systole}, $H(k)$ acts freely on $\mathcal{H}(k)$. 
Let $g_k$ be the genus of the closed surface
 $\Sigma(k):=\mathcal{H}(k)/H(k)$ and let
 $\Sys(\Sigma(k))$ be the set of systoles of 
 $\Sigma(k)$. 
Set

\centerline{$\delta=12\sqrt{e^{-\gamma}\ln 3}
$.}

\begin{lemma}\label{delta}
\begin{enumerate}
\item We have $$\lim_{k\to\infty}\,g_k=\infty$$

\item Let $\delta^+> \delta$. For almost all $k\in B$, we have

$$\Card\,\Sys(\Sigma(k))\leq {6\delta^+\over \sqrt{\ln\ln\ln g_k}}\,\,
{g_k\over \sqrt{ \ln g_k}}.$$

 \end{enumerate}
\end{lemma}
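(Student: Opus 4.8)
The plan is to reduce both assertions to a single combinatorial count of the tesselation $\tau_{H(k)}$ on $\Sigma(k)$, and then to feed in the index bound of Proposition \ref{subgroup} together with Landau's Theorem \ref{Landau}.

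First I would record the relevant combinatorial data. Since $W(k)$ acts simply transitively on the tiles of $\mathcal{H}(k)$ and $H(k)$ is normal (Proposition \ref{subgroup}), the hypothesis $H(k)^w\cap B_{4k}=\{1\}$ of Criterion \ref{systole} reduces to $H(k)\cap B_{4k}=\{1\}$, which is Proposition \ref{subgroup}(1); hence $H(k)$ acts freely and the number of tiles of $\Sigma(k)$ equals $N:=[W(k):H(k)]$. Each hexagon has six edges, shared in pairs, and six vertices, each of valence four, so the tesselation has $E=3N$ edges and $V=3N/2$ vertices. The Euler characteristic is therefore $\chi=V-E+F=-N/2$, giving $g_k=1+N/4$. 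Moreover each edge lies in a unique curve and every curve is $2k$-regular, i.e. consists of exactly $2k$ edges, so the number of curves is $E/(2k)=3N/(2k)$. By Criterion \ref{systole} the systoles are exactly the curves, whence
\[
\Card\,\Sys(\Sigma(k))=\frac{3N}{2k}=\frac{6(g_k-1)}{k}.
\]

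For Assertion (1) it suffices to prove $N\to\infty$. The equality $H(k)\cap B_{4k}=\{1\}$ makes the quotient map $W(k)\to W(k)/H(k)$ injective on $B_{2k}$: if $u,v\in B_{2k}$ satisfy $uH(k)=vH(k)$ then $v^{-1}u\in H(k)\cap B_{4k}=\{1\}$. Hence $N\geq\Card B_{2k}$. Since the indices $1$ and $4$ differ by $3$, no defining relation of $W(k)$ binds $s_1$ and $s_4$, so $m_{s_1,s_4}=\infty$ and $s_1s_4$ has infinite order; the powers $(s_1s_4)^j$ with $0\leq j\leq k$ are then $k+1$ distinct elements of $B_{2k}$. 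Thus $\Card B_{2k}\to\infty$ and $g_k=1+N/4\to\infty$.

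For Assertion (2), substituting $\Card\,\Sys(\Sigma(k))=6(g_k-1)/k$ and using $g_k-1<g_k$ shows that the displayed inequality is implied by
\[
\sqrt{\ln g_k\cdot\ln\ln\ln g_k}\leq\delta^+\,k.
\]
It is therefore enough to bound $\ln g_k$ from above. From $g_k=1+N/4\leq N$ and Proposition \ref{subgroup}(2) I obtain $\ln g_k\leq 72\,k\,\phi(2k)\,\ln 3$ for large $k$. For $k=q_n\in B$ one has $2k=4\prod_{i=1}^n p_i$ with the $p_i$ odd, so $\phi(2k)=2\phi(k)$ and hence $\ln g_k\leq 144\ln 3\cdot k\,\phi(k)$. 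Landau's Theorem \ref{Landau} then yields, along $B$,
\[
\ln g_k\leq(1+o(1))\,144\,e^{-\gamma}\ln 3\,\frac{k^2}{\ln\ln k}=(1+o(1))\,\delta^2\,\frac{k^2}{\ln\ln k},
\]
since $\delta^2=144\,e^{-\gamma}\ln 3$.

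To finish I would take iterated logarithms of this bound: it gives $\ln\ln g_k=(1+o(1))\,2\ln k$ and therefore $\ln\ln\ln g_k=(1+o(1))\,\ln\ln k$. Combining with the bound on $\ln g_k$,
\[
\sqrt{\ln g_k\cdot\ln\ln\ln g_k}\leq(1+o(1))\,\delta\,\frac{k}{\sqrt{\ln\ln k}}\,\sqrt{\ln\ln k}=(1+o(1))\,\delta\,k.
\]
Since $\delta^+>\delta$, the factor $(1+o(1))\delta$ is eventually at most $\delta^+$, so the inequality holds for almost all $k\in B$. I expect the main obstacle to be precisely this asymptotic matching: one must track the constants so that $144\,e^{-\gamma}\ln 3$ assembles into $\delta^2$, and one must verify that the triple logarithm $\ln\ln\ln g_k$ is asymptotic to $\ln\ln k$, so that it cancels the $\sqrt{\ln\ln k}$ produced by Landau's estimate for $\phi(k)/k$; the remaining slack between $\delta$ and $\delta^+$ then absorbs all the $o(1)$ errors.
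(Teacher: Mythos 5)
Your proof is correct. For Assertion (2) you follow essentially the same route as the paper: the Euler-characteristic count giving $\Card\,\Sys(\Sigma(k))=6(g_k-1)/k$ and $g_k\leq[W(k):H(k)]/4+1$, the index bound $3^{72k\phi(2k)}=3^{144k\phi(k)}$ (using $\phi(2k)=2\phi(k)$ on $B$), Landau's theorem to get $\ln g_k\leq(1+o(1))\,\delta^2k^2/\ln\ln k$, and the observation that $\ln\ln\ln g_k\leq(1+o(1))\ln\ln k$ cancels the $\sqrt{\ln\ln k}$; the paper packages this as the chain (24.1)--(24.6) but the content and the constant bookkeeping are identical. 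Where you genuinely diverge is Assertion (1): the paper deduces $g_k\to\infty$ in one line from the fact that the systole length $2kL$ tends to infinity (implicitly invoking the logarithmic upper bound on systoles in terms of genus), whereas you bound the index from below directly, noting that $H(k)\cap B_{4k}=\{1\}$ makes $W(k)\to W(k)/H(k)$ injective on $B_{2k}$ and that $m_{s_1,s_4}=\infty$ supplies $k+1$ distinct elements $(s_1s_4)^j$ there. Your argument is more elementary and self-contained (it needs no hyperbolic geometry beyond what is already in Section 3), at the cost of invoking the standard fact that the order of $s_1s_4$ equals the Coxeter matrix entry. One cosmetic point: you assert $\ln\ln g_k=(1+o(1))\,2\ln k$ as an equality, but at that stage you only have an upper bound on $\ln g_k$; since $\ln\ln\ln$ is increasing and your final display uses $\leq$, only the upper bound is needed, so this does not affect the proof.
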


\begin{proof}
By Lemma \ref{systole},  $\Sys(\Sigma(k))$ is exactly the set of curves of the tesselation
$\tau_{H(k)}$. Since the systole length of $\Sigma(k)$
tends to $\infty$ as $k$ tends to $\infty$, so is its genus $g_k$, which proves the first assertion.

In the proof of the second assertion, the integer $k$ varies over $B$.
Set  

\centerline{$\delta(k)=3^{72 k\phi(2k)}=3^{144 k\phi(k)}$.}

\noindent By
Theorem \ref{Landau}, we have
 $\ln\delta(k)
\sim 144e^{-\gamma}\ln 3k^2\,/\ln\ln k$, hence

(24.1)\hskip3cm
$ (\ln\ln k)\, \ln\delta(k)\sim \delta^2 k^2$.

\noindent It follows that 

(24.2)\hskip3cm $k>\sqrt{\ln\delta(k)}$, for $k>>0$.

\noindent When $k$ tends to infinity, we have
$\ln\ln\ln \delta(k)\sim \ln\ln\ln \sqrt{\delta(k)}$.
Hence Equation (24.2) implies that

(24.3)\hskip3cm $(\delta^+)^2\ln\ln k>\delta^2\ln\ln\ln \delta(k)$ 
for $k>>0$.

Combining Equations (24.1) and (24.3) we get that

\centerline{$(\delta^+ k)^2>\ln\delta(k)\,\ln\ln\ln \delta(k)$
for $k>>0$,}

\noindent thus we have

(24.4)\hskip3cm
${1\over k}< {\delta^+\over\sqrt{\ln\ln\ln \delta(k)}} \,{1\over \sqrt{\ln\delta(k)}}$, for $k>>0$.

\smallskip
Let $f_0, f_1$ and $f_2$ be the number of vertices, edges and tiles of the tesselation $\tau_{H(k)}$.
Since it is a hexagonal tesselation and each vertex 
has valence $4$, we have
$f_1=3f_2$ and $f_0=f_1/2$. Since
$2(g_k-1)=f_1-f_2-f_0$, we have
$2(g_k-1)=f_2/2=[W(k):H(k)]/2$, hence
we have $g_k\leq \delta(k)$. It follows that

(24.5)\hskip3cm
${1\over k}< {\delta^+\over\sqrt{\ln\ln\ln g_k}} \,{1\over \sqrt{\ln g_k}}$, for $k>>0$.

\noindent
The number  of systoles is 
$f_1/2k=6(g_k-1)/k<6 g_k/k$. It follows from
equation  (24.5) that

(24.6)\hskip3cm
$\Card\,\Syst(\Sigma(k))< {6\delta^+\over\sqrt{\ln\ln\ln g_k}} \,{g_k\over \sqrt{\ln g_k}}$, for $k>>0$.
\end{proof}

\bigskip\noindent
{\it 5.3 The bound for $\mathrm{Fill}(g)$} 

\noindent The following statement is a stronger form 
of the theorem stated in the introduction.

\begin{thm}\label{IM} There exists 
an infinite set $A$ of integers $g\geq 2$ and,
for any $g\in A$,  
a   closed oriented hyperbolic surface $\Sigma_g$ of genus $g$,
endowed with a standard hexagonal tesselation $\tau_g$, satisfying the following assertions

\begin{enumerate}

\item  the set of curves of $\tau_g$
 is the set of systoles of $\Sigma_g$, and

\item we have

\centerline{$\Card\,\Syst(\Sigma_g)\leq {57\over \sqrt{\ln\ln\ln g}}\,\,
{g\over \sqrt{ \ln g}}$.}
\end{enumerate}
\end{thm}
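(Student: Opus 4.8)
The plan is to assemble Theorem \ref{IM} from the machinery already built, taking the surfaces $\Sigma(k) = \mathcal{H}(k)/H(k)$ from Section 5 and verifying that they satisfy both assertions for suitable $k$ in the set $B$. First I would invoke Proposition \ref{subgroup}, which gives the normal subgroup $H(k) \subset W(k)^+$ with $B_{4k} \cap H(k) = \{1\}$. Since $H(k)$ is normal, every conjugate $H(k)^w$ equals $H(k)$, so the hypothesis of Criterion \ref{systole} — that $H^w \cap B_{4k} = \{1\}$ for all $w$ — holds automatically. I would restrict attention to \emph{even} $k \geq 4$, which is no loss because the relevant set $B = \{6, 30, 210, \dots\}$ consists entirely of even integers. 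Criterion \ref{systole} then delivers assertion (1) directly: $H(k)$ acts freely on $\mathcal{H}(k)$, the tesselation $\tau_{H(k)}$ is $2k$-regular, and its set of curves is exactly the set of systoles of $\Sigma(k)$. This is the conceptual heart, and it is already done; my task is to package it.

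Next I would address the quantitative bound (2). Lemma \ref{delta} does almost all of the work: its first part shows $g_k \to \infty$, so the genera $g_k$ realize infinitely many distinct values, giving the infinite index set $A := \{g_k : k \in B,\ k \gg 0\}$. Its second part shows that for any fixed $\delta^+ > \delta = 12\sqrt{e^{-\gamma}\ln 3} = 9.42\dots$, and for almost all $k \in B$,
\[
\Card\,\Sys(\Sigma(k)) \leq \frac{6\delta^+}{\sqrt{\ln\ln\ln g_k}}\,\frac{g_k}{\sqrt{\ln g_k}}.
\]
To pass from $6\delta^+$ to the clean constant $57$, I would simply note that $6\delta = 6 \cdot 9.42\dots = 56.5\dots < 57$, so choosing $\delta^+$ close enough to $\delta$ makes $6\delta^+ < 57$; discarding the finitely many small $k$ for which the ``almost all'' estimate fails leaves an infinite subset of $B$, and hence an infinite $A$, on which the stated inequality holds.

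The only genuinely delicate point is the passage from the bound in terms of the \emph{upper estimate} $\delta(k) = 3^{72k\phi(2k)}$ for the index $[W(k):H(k)]$ to a bound in terms of the \emph{actual} genus $g_k$. This is handled inside Lemma \ref{delta} via the Euler-characteristic count: for a $4$-valent hexagonal tesselation one has $f_1 = 3f_2$ and $f_0 = f_1/2$, whence $2(g_k - 1) = f_2/2 = [W(k):H(k)]/2$ and therefore $g_k \leq \delta(k)$. The number of systoles equals the number of curves, namely $f_1/(2k) = 6(g_k-1)/k < 6g_k/k$, and the reciprocal bound (24.5) on $1/k$ — itself a consequence of Landau's Theorem \ref{Landau} on the asymptotics of $\phi(k)/k$ — converts this into the desired form. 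The main obstacle I anticipate is bookkeeping the direction of all the asymptotic inequalities so that the monotonicity of $g_k \leq \delta(k)$ pushes the estimate the correct way; but since these chains are already established in Lemma \ref{delta}, the proof of Theorem \ref{IM} reduces to quoting Criterion \ref{systole} and Lemma \ref{delta} and observing $6\delta < 57$.
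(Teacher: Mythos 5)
Your proposal is correct and follows essentially the same route as the paper: assertion (1) comes from Proposition \ref{subgroup} plus Criterion \ref{systole} (with the evenness of $k\in B$ noted, a point the paper leaves implicit), assertion (2) comes from Lemma \ref{delta}, and the constant $57$ arises exactly as in the paper by taking $\delta^+$ with $6\delta^+\leq 57$ (the paper fixes $\delta^+=57/6=9.5$) and discarding finitely many $k$; the infinite set $A$ is extracted from $g_k\to\infty$ just as in the paper's choice of an injective restriction $k\mapsto g_k$ on a subset $B'\subset B$.
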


\begin{proof} Let us use the notations
of Subsection 5.2 and set $\delta^+=9.5=57/6$. 

By 
the first assertion of Lemma \ref{delta}, there is an infinite subset $B'\subset B$ such that
 the map $k\in B'\mapsto g_k\in\Z$ is injective,
Set $A:=\{g_k\mid k\in B'\}$ and, for $g\in A$
set $\Sigma_g=\Sigma(k)$ and $\tau_g=\tau_{H(k)}$ where
$k\in B'$ is uniquely defined by $g_k=g$. 

It follows from the second assertion of Lemma \ref{delta} that 

\hskip3cm$\Card\,\Sys(\Sigma_g)\leq {57\over \sqrt{\ln\ln\ln g}}\,\,
{g\over \sqrt{ \ln g}}$, for any $g\in A$.
\end{proof}

\bigskip\noindent
{\it 5.4 Final remark} 

\noindent The constant $57$ in the theorem  can be replaced by any real number
$a>6\delta=56.547\dots$. This constant can be improved in two ways.

First, one can use the fact that the Tits representations lies inside the orthogonal group
$\O_6(K,q)$, where $q$ is the quadratic form defined by $B$. 
Second, the results concerning hexagon tesselations clearly extend to $2p$-gon tesselations, for any $p\geq 3$. Using octogons instead of hexagons  
provides a marginally better bound. 
We have restricted ourselves to this version in order
to keep the paper as elementary as possible.

The paper \cite{APP} and our result suggests that $\Fill(g)$ should be of 
``order of magnitude" $g/(\ln g)^\alpha$ for some $\alpha$ with
$1/2\leq \alpha\leq 1$, but we cannot formulate a precise conjecture at this stage.

\bibliography{SystoleStoryBib}
\bibliographystyle{plain}
\end{document}